\newtheorem{theorem}{Theorem}[section]
\newtheorem{lemma}{Lemma}[section]
\newtheorem{corollary}{Corollary}[section]
\newtheorem{definition}{Definition}[section]
\newtheorem{remark}{Remark}[section]
\newtheorem{proposition}{Proposition}[section]
\newtheorem{conjecture}{Conjecture}[section]
\DeclareMathOperator{\inter}{int}
\def\conv{\mathop\mathrm{conv}\nolimits}
\def\K{\mathcal{K}}
\def\P{\mathcal{P}}
\def\L{\mathcal{L}}
\def\R{\mathbb{R}}
\def\Z{\mathbb{Z}}
\def\N{\mathbb{N}}
\newcommand{\ind}{\mathrm{ind}}
\newcommand{\vol}{\mathrm{vol}}
\newcommand{\LE}{\mathrm{G}}
\newcommand{\lE}{\mathrm{g}}
\newcommand{\LL}{\mathrm{L}}
\newcommand{\aff}{\mathrm{aff}}
\newcommand{\lin}{\mathrm{lin}}
\newcommand{\fac}{\mathcal{F}}
\newcommand{\set}{\mathcal{X}}
\numberwithin{equation}{section}
\begin{document}

\title[Lattice points of zonotopes and lattice-face polytopes]{Notes on lattice points of\\ zonotopes and lattice-face polytopes}
\author{Christian Bey, Martin Henk, Matthias Henze and Eva Linke}

\address{Fachbereich 3, Mathematik, Universit\"at Bremen, 
Postfach 330 440, 28334 Bremen, Germany}
\email{bey@math.uni-bremen.de}
\address{Fakult\"at f\"ur Mathematik, Otto-von-Guericke
Universit\"at Mag\-deburg, Universit\"atsplatz 2, D-39106 Magdeburg,
Germany}
\email{\{martin.henk,matthias.henze,eva.linke\}@ovgu.de}

\thanks{The work of the last two authors was supported by the Deutsche
  Forschungsgemeinschaft (DFG) within the project He 2272/4-1.}

\subjclass[2000]{52C07, 52B20, 52A40, 11H06}

\keywords{Zonotope, lattice-face polytope, Ehrhart polynomial, successive minima}

\begin{abstract}
Minkowski's second theorem on successive minima gives an upper bound
on the volume of a convex body in terms of its successive minima. We
study the problem to generalize Minkowski's bound by replacing the volume by the lattice point
enumerator of a convex body. In this context we are 
interested in bounds on the coefficients of  Ehrhart
polynomials of  lattice polytopes via the successive minima. Our results for
lattice zonotopes and lattice-face polytopes imply, in particular,  that for  $0$-symmetric
lattice-face polytopes and lattice parallelepipeds the volume can be replaced
by the lattice point enumerator.
\end{abstract}

\maketitle

\section{Introduction}

Let $\K^n$ be the set of all convex bodies in $\R^n$, i.e., compact convex sets with non-empty interior. The additional subscript in $\K^n_0$ points out that the considered convex bodies are $0$-symmetric. When dealing with polytopes we write $\P^n$ and $\P^n_0$, and for $P\in\P^n$ we denote by $\mathrm{vert}(P)$ its set of vertices. The family of $n$-dimensional lattices in $\R^n$ is written as $\L^n$ and the usual Lebesgue measure with respect to the $n$-dimensional space as $\vol_n(\cdot)$. If the ambient space is clear from the context we omit the subscript and just write $\vol(\cdot)$. For some subset $K\subset\R^n$ and some lattice $\Lambda\in\L^n$ the lattice point enumerator is denoted by $\LE(K,\Lambda)=\#(K\cap\Lambda)$. If $\Lambda=\Z^n$ we shortly write $\LE(K)=\LE(K,\Z^n)$. In the following we study relations between this quantity and {\em Minkowski's successive minima} which are defined as
\begin{equation*}
\lambda_i(K,\Lambda)=\min\{\lambda>0:\dim(\lambda K\cap\Lambda)\geq i\},\,1\leq i\leq n,
\end{equation*}
for a $0$-symmetric convex body $K\in\K^n_0$ with respect to a lattice $\Lambda\in\L^n$. Note that $\dim(S)$ denotes the dimension of the affine hull of $S\subset\R^n$. If $\Lambda=\Z^n$ we just write $\lambda_i(K)=\lambda_i(K,\Z^n)$. These numbers form an increasing sequence, so $\lambda_1(K,\Lambda)\leq\ldots\leq\lambda_n(K,\Lambda)$, and as functionals on $\K^n_0\times\L^n$ they are homogeneous of degree $-1$ in the first and of degree $1$ in the second argument. An important and deep result in the geometry of numbers is the following theorem which is usually referred to as Minkowski's second theorem on convex bodies (cf. \cite[pp.~376]{Gruber2007}).

\begin{theorem}[Minkowski, 1896]\label{thm_mink}
Let $K\in\K^n_0$ and $\Lambda\in\L^n$ be a lattice. Then,
\begin{equation*}
 \lambda_1(K,\Lambda)\cdot\ldots\cdot\lambda_n(K,\Lambda)\vol(K)\leq 2^n\det(\Lambda).
\end{equation*}
\end{theorem}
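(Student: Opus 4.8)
The plan is first to normalise the lattice and then to recast the inequality as a single volume bound for a linear image of $K$. Since a regular linear map $T$ satisfies $\vol(TK)=|\det T|\,\vol(K)$, $\det(T\Lambda)=|\det T|\,\det(\Lambda)$ and $\lambda_i(TK,T\Lambda)=\lambda_i(K,\Lambda)$, the asserted inequality is invariant under $\mathrm{GL}_n(\R)$. Applying the inverse of a basis matrix of $\Lambda$, I may therefore assume $\Lambda=\Z^n$, so that only $\lambda_1\cdots\lambda_n\,\vol(K)\le 2^n$ has to be shown, where I abbreviate $\lambda_i:=\lambda_i(K,\Z^n)$.

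Next I would pick linearly independent lattice vectors $a_1,\dots,a_n\in\Z^n$ attaining the minima, i.e. $a_i\in\lambda_i K$, set $L_i=\span\{a_1,\dots,a_i\}$ and $L_0=\{0\}$, and fix a basis $b_1,\dots,b_n$ of $\Z^n$ adapted to the flag $L_0\subset L_1\subset\dots\subset L_n$, so that $L_i\cap\Z^n=\Z b_1+\dots+\Z b_i$ for every $i$. Let $\phi$ be the linear automorphism that multiplies the $b_i$-coordinate of a point by $\lambda_i$. Then $\det\phi=\lambda_1\cdots\lambda_n$, and since $\vol(\phi K)=\det\phi\cdot\vol(K)$, the claim is equivalent to the clean volume bound $\vol(\phi K)\le 2^n$.

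It is tempting to deduce this from the convex body theorem by arguing that the translates $g+\tfrac12\phi(\inter K)$, $g\in\Z^n$, are pairwise disjoint, since a packing density at most $1$ would give exactly $\tfrac1{2^n}\vol(\phi K)\le 1$. This packing, however, is false in general: for a symmetric convex $K$ one has $\tfrac12\phi(\inter K)-\tfrac12\phi(\inter K)=\phi(\inter K)$, and already for suitable planar ellipses the set $\phi(\inter K)$ contains a nonzero lattice point, so the transformed copies overlap. This is precisely where the second theorem goes beyond the comparatively elementary first one, and it is the main obstacle; the anisotropic scaling $\phi$ does not map $K$ into itself, so the minimality defining the $\lambda_i$ cannot be exploited slot by slot.

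To overcome this I would slice $K$ along the last-minimum direction. Writing $H=L_{n-1}$, $\Lambda'=\Z^n\cap H$ and $K'=K\cap H$, one checks directly from the definition that $\lambda_i(K',\Lambda')=\lambda_i$ for $1\le i\le n-1$; feeding the inductive bound in $H$ into the product, the whole theorem reduces to the single slice inequality $\lambda_n\,\vol(K)\le 2\,\tfrac{\det(\Z^n)}{\det(\Lambda')}\,\vol_{n-1}(K')$. Here Brunn's theorem is the natural tool: the function $s\mapsto\vol_{n-1}\bigl(K\cap(H+su)\bigr)^{1/(n-1)}$ is concave and, by the symmetry of $K$, maximal at the central slice $s=0$, while $a_n\in\lambda_n K$ lies off $H$ and so forces a nonempty slice at transverse lattice-height $\lambda_n^{-1}$. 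The hard part is that concavity together with this one nonempty slice is not yet enough: pinning $\int_\R\vol_{n-1}\bigl(K\cap(H+su)\bigr)\,ds=\vol(K)$ against $\vol_{n-1}(K')$ with the sharp constant requires the \emph{full} force of $\lambda_n$ being a minimum over the lattice (which bounds how fat $K$ can be before a smaller dilate already captures $n$ independent lattice points), with equality for cylinders such as the cube. Making this last comparison precise — rather than the routine reductions above — is where the real content of the theorem resides.
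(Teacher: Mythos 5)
The paper does not actually prove Theorem \ref{thm_mink}; it is quoted as a classical result with a reference to Gruber's book, so there is no in-paper argument to compare against and your attempt has to stand on its own. Its routine parts are fine: the reduction to $\Lambda=\Z^n$, the identity $\lambda_j(K\cap H,\Z^n\cap H)=\lambda_j(K)$ for $j\le n-1$ when $H=\lin\{a_1,\dots,a_{n-1}\}$ (monotonicity one way, the presence of $a_1,\dots,a_{n-1}$ the other), and the arithmetic showing that the induction closes once one has the slice inequality $\lambda_n\vol(K)\le 2\,\tfrac{\det(\Z^n)}{\det(\Z^n\cap H)}\,\vol_{n-1}(K\cap H)$. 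But that inequality is precisely where the whole theorem lives, and you leave it unproved --- indeed you say so yourself in the last sentence. Note that for $n=2$ it is \emph{identical} to the theorem (since $\vol_1(K\cap\R a_1)=2|a_1|/\lambda_1$ and $\det(\Z^2\cap\R a_1)=|a_1|$, it rearranges to $\lambda_1\lambda_2\vol(K)\le4$), so the induction transfers the entire difficulty into the unproved step rather than reducing it.

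Moreover, the specific route you sketch for that step cannot be completed. Brunn's theorem gives $\vol(K)\le\vol_{n-1}(K\cap H)\cdot\vol_1(K|H^\perp)$, after which you would need $\lambda_n\cdot\vol_1(K|H^\perp)\le 2\det(\Z^n|H^\perp)$; this is false in general. Take $K=\conv\left\{\pm(1,0),\pm\left(\tfrac12,\tfrac13\right)\right\}$: here $\lambda_1=1$ with $a_1=(1,0)$, $H=\R e_1$, and a short computation (the slice of $\lambda K$ at height $1$ is the interval $\left[\tfrac92-\lambda,\lambda-\tfrac32\right]$, centred at $\tfrac32$) gives $\lambda_2=\tfrac72$, while $\vol_1(K|H^\perp)=\tfrac23$ and $\det(\Z^2|H^\perp)=1$, so $\lambda_2\cdot\vol_1(K|H^\perp)=\tfrac73>2$. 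The slice inequality itself survives in this example only because the Brunn estimate is lossy there, which confirms that ``concave profile plus one nonempty transverse slice'' is not the right mechanism. The classical proofs (Minkowski's original, Davenport, Weyl, Estermann, or the packing proof in which the anisotropic map $\phi$ is replaced by a lattice-periodic, piecewise-defined deformation that does pack) all require exactly the extra construction your argument stops short of; as written, the attempt is an honest but incomplete reduction, not a proof.
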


The relevance of this result is also illustrated by the big number of proofs and generalizations from various contexts (see \cite{HenkWills2008} for a survey report). A discrete version of Minkowski's theorem was proposed, and proved in the planar case, in \cite{BetkeHenkWills1993} where the volume is replaced by the lattice point enumerator of $K\in\K^n_0$.

\begin{conjecture}[Betke, Henk, Wills, 1993]\label{conj_main}
Let $K\in\K^n_0$ and $\Lambda\in\L^n$ be a lattice. Then,
\begin{equation*}
 \LE(K,\Lambda)\leq\prod_{i=1}^n\left\lfloor\frac2{\lambda_i(K,\Lambda)}+1\right\rfloor.
\end{equation*}
\end{conjecture}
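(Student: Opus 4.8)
The plan is to argue by induction on the dimension $n$, reducing the bound for $K\in\K^n_0$ to the $(n-1)$-dimensional statement by slicing $K$ along a suitable family of parallel lattice hyperplanes. The base case $n=1$ is immediate: for a symmetric segment $K=[-a,a]$ and $\Lambda=\Z$ one has $\lambda_1(K,\Lambda)=1/a$ and $\LE(K,\Lambda)=2\lfloor a\rfloor+1\le\lfloor 2a+1\rfloor=\lfloor 2/\lambda_1(K,\Lambda)+1\rfloor$, and the general lattice case follows after a linear normalization. For the inductive step I would fix linearly independent vectors $b_1,\dots,b_n\in\Lambda$ realizing the successive minima, i.e.\ $b_i\in\lambda_i(K,\Lambda)K$, together with the dual functionals $\ell_i$ determined by $\ell_i(b_j)=\delta_{ij}$, and use them to organize the lattice points of $K$.

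Concretely, I would pick a primitive functional $u$ in the dual lattice $\Lambda^\star$ and write $\Lambda\cap K$ as the disjoint union of the slices $K\cap H_k$, $H_k=\{x:\langle u,x\rangle=k\}$, $k\in\Z$. Each nonempty slice is a translate of the $(n-1)$-dimensional lattice $\Lambda'=\Lambda\cap H_0$ intersected with the section $K\cap H_k$; the central section $K\cap H_0$ is a symmetric body in $H_0$ to which the induction hypothesis applies, and a convexity-plus-symmetry argument shows every parallel slice contains at most as many lattice points, so each slice contributes at most a product of $n-1$ floor terms. The number of nonempty slices is at most $\lfloor w_u(K)\rfloor+1$, where $w_u(K)=\max_{x\in K}\langle u,x\rangle-\min_{x\in K}\langle u,x\rangle=2\,h_K(u)$ is the width of $K$ in direction $u$. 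Choosing $u$ to minimize this width makes the slice count as small as possible and, for the model case where $K$ is the box $\{x:\lambda_i\,|\langle\ell_i,x\rangle|\le1,\ i=1,\dots,n\}$ adapted to the $b_i$, reproduces exactly the factorization $\prod_i\lfloor 2/\lambda_i(K,\Lambda)+1\rfloor$: the thinnest direction supplies the factor $\lfloor 2/\lambda_n(K,\Lambda)+1\rfloor$ and the central section supplies the remaining $n-1$ factors.

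The hard part is that this factorization is fragile precisely because the body need not be aligned with its successive minima. Writing a lattice point as $x=\sum_i\alpha_i b_i$, the coordinate functionals satisfy only $h_K(\ell_i)\ge1/\lambda_i(K,\Lambda)$ (since $b_i/\lambda_i(K,\Lambda)\in K$), so $K$ may be far longer than $2/\lambda_i(K,\Lambda)$ along $b_i$; dually, the minimal width $\min_u w_u(K)=2\lambda_1(K^\star,\Lambda^\star)$ obeys only the transference bound $\ge 2/\lambda_n(K,\Lambda)$, so the slice count in the best direction can exceed $\lfloor 2/\lambda_n(K,\Lambda)+1\rfloor$. Thus the clean ``slices $\times$ per-slice content'' estimate overshoots for skew bodies, and one must recover the loss by showing that skew sections are correspondingly lattice-point-poor. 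Making this trade-off precise — ideally through a discrete analogue of the volume-averaging argument underlying Minkowski's own proof of Theorem~\ref{thm_mink}, or through an analytic transference estimate controlling $\LE(K,\Lambda)$ and the section counts simultaneously — is the genuine obstacle, and is exactly the point at which a uniform argument for all $K\in\K^n_0$ resists completion beyond the planar case and the structured families (lattice zonotopes, lattice-face polytopes) treated below.
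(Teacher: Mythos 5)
The statement you set out to prove is Conjecture~\ref{conj_main}, and the paper contains \emph{no} proof of it: it is open in general, known only for $n=2$ \cite{BetkeHenkWills1993} and $n=3$ \cite{Malikiosis2010}, with the best general result being the bound up to a factor $c^n$, $c=\sqrt[3]{40/9}$ \cite{Malikiosis2009}. So there is no proof in the paper to compare yours against; the paper instead proves coefficient-wise inequalities $\lE_i(P)\leq\sigma_i(P)$ for special classes (lattice parallelepipeds, zonotopes up to a factor $n!/i!$, lattice-face polytopes), and its Proposition~\ref{prop_badexample_coeff} shows the coefficient-wise strengthening fails for general lattice polytopes, so even the paper's own route cannot give the conjecture in full. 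Your proposal is, by your own admission in its final paragraph, a programme rather than a proof, and that self-assessment is accurate.

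Beyond the obstacle you yourself name, two steps in the sketch fail as stated. First, the claim that ``a convexity-plus-symmetry argument shows every parallel slice contains at most as many lattice points'' as the central section is not available: the slice $K\cap H_k$ meets a \emph{translate} of $\Lambda'=\Lambda\cap H_0$, and what symmetry and convexity actually give is that differences $x-y$ of lattice points in $K\cap H_k$ lie in $2(K\cap H_0)\cap\Lambda'$, whence $\#\bigl(K\cap H_k\cap\Lambda\bigr)\leq\LE\bigl(2(K\cap H_0),\Lambda'\bigr)$; each inductive factor thus degrades from $\lfloor 2/\lambda_j+1\rfloor$ towards $\lfloor 4/\lambda_j+1\rfloor$, and whether central lattice sections maximize the count is itself a delicate open-ended question you may not assume. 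Second, even granting good slices, you must compare $\lambda_j(K\cap H_0,\Lambda')$ with $\lambda_j(K,\Lambda)$; the available comparison (in the spirit of Lemma~\ref{lem:ineq_lambdas}) needs $H_0$ positioned compatibly with vectors realizing the minima, which is in direct tension with choosing $u$ of minimal width, and that minimal width exceeds $2/\lambda_n(K,\Lambda)$ by flatness-type factors, as you correctly observe. These per-step losses compound multiplicatively, which is precisely why this inductive slicing scheme is known to yield only $c^n$-approximate versions of the conjecture, and why completing it (as Malikiosis did for $n=3$) requires substantially finer ideas. In short: your diagnosis of the difficulty is sound, but what you have is not a proof --- and none exists in the paper either.
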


This conjecture would not only generalize Theorem \ref{thm_mink}\, but also unify this and other particular results from geometry of numbers, for example $\LE(K)\leq 3^n$, for $K\in\K^n_0$ whose only interior lattice point is the origin (cf. \cite[p.~79]{Minkowski1910}).
Recently, Malikiosis \cite{Malikiosis2010, Malikiosis2009} settled the
three-dimensional case by an inductive approach and obtained the
smallest known constant $c=\sqrt[3]{40/9}\approx1.64414$ such
that, roughly speaking, the conjecture holds up to the factor
$c^n$. Already proposed in \cite[Ch.~2, \textsection
9]{GruberLekkerkerker1987}, it is natural to extend the notion of
successive minima to general, not necessarily $0$-symmetric, convex
bodies $K\in\K^n$ via some symmetrization, e.g., by considering
$\lambda_i(\frac{1}{2}\,DK,\Lambda)$, where  $DK=K-K$. With this notation the above conjecture for $K\in\K^n$ reads
\begin{equation}
 \LE(K,\Lambda)\leq\prod_{i=1}^n\left\lfloor\frac1{\lambda_i(DK,\Lambda)}+1\right\rfloor,
\label{eq:conj_genP}
\end{equation}
and we will mostly deal with this more general question.\par

A helpful observation is, that it suffices to prove
\eqref{eq:conj_genP} for lattice polytopes $P\in\P^n$, i.e.,
$\mathrm{vert}(P)\subset\Lambda$. Indeed, since the successive minima
are monotonic functionals, i.e., if $K,K'\in\K^n_0$ with $K\subseteq
K'$, then $\lambda_i(K,\Lambda)\geq\lambda_i(K',\Lambda)$, for all
$1\leq i\leq n$, we can consider $P_K=\conv\{K\cap\Lambda\}$. If $\dim
P_K<n$ then it suffices to consider \eqref{eq:conj_genP} for $P_K$ and
with respect to the lattice $\Lambda\cap\lin(P_K)$, where $\lin(\cdot)$ denotes the
linear hull.

Furthermore, since any lattice $\Lambda\in\L^n$ can be written as $A\Z^n$ for some invertible matrix $A\in\R^{n\times n}$, and $\lambda_i(K,A\Z^n)=\lambda_i(A^{-1}K,\Z^n)$, we can also restrict to the case $\Lambda=\Z^n$. This reduction to lattice polytopes allows us to utilize Ehrhart theory which is a very active research topic in recent years. Its origin goes back to a work of Eug\`ene Ehrhart \cite{Ehrhart1962} from 1962 who showed that for a given lattice polytope $P\in\P^n$ the function $k\mapsto\LE(kP)$ is a polynomial in $k\in\N$ of degree $n$. Thus,
\begin{equation*}
G(kP)=\sum_{i=0}^n\lE_i(P)k^i,
\end{equation*}
where $\lE_i(P)$ depends only on $P$ and is said to be the $i$th Ehrhart coefficient of $P$. Ehrhart already noticed that $\lE_n(P)=\vol(P),\lE_0(P)=1$ and $\lE_{n-1}(P)$ is the normalized surface area of $P$ (see \cite{BeckRobins2007} for details). Moreover, it can be easily seen that the coefficient $\lE_i$ is homogeneous of degree $i$. Having this by hand, instead of \eqref{eq:conj_genP}, one can consider the somewhat weaker inequality
\begin{equation}\label{eq:conj_weaker}
 \LE(P)\leq\prod_{i=1}^n\left(\frac1{\lambda_i(DP)}+1\right). 
\end{equation}
Let $\LL(P)$ denote the right hand side of this inequality. Then   
\begin{equation*}
\LL(P)=\prod_{i=1}^n\left(\frac{1}{\lambda_i(DP)}+1\right)=\sum_{i=0}^n\sigma_i\left(\frac1{\lambda_1(DP)},\dots,\frac1{\lambda_n(DP)}\right), 
\end{equation*}
where $\sigma_i$ denotes the $i$th elementary symmetric polynomial of
$n$ numbers $x_j$, i.e.,
$\sigma_i(x_1,\dots,x_n)=\sum_{I\subseteq[n],\,\#I=i}\prod_{j\in
  I}x_j$, where $[n]=\{1,\dots,n\}$, and
$\sigma_0(x_1,\dots,x_n)=1$. 
For short we will just write
\begin{equation*}
\sigma_i(P)=\sigma_i\left(\frac{1}{\lambda_1(DP)},\dots,
  \frac{1}{\lambda_n(DP)}\right).
\end{equation*}
 With this notation inequality
\eqref{eq:conj_weaker} is equivalent to $\LE(P)\leq\LL(P)$ and we may
ask whether the coefficient-wise inequalities 
\begin{equation}
\lE_i(P)\leq\sigma_i(P)\label{ineq_coeff_wise}
\end{equation}
 hold for all $i=0,\dots,n$. The case $i=0$ is trivial since in this
 case both sides are equal to 1. For $i\geq 1$  the question is supported by
 two known inequalities in this list. First of all, we have
 $\lE_n(P)\leq\sigma_n(P)$, which follows from Theorem \ref{thm_mink}
 after applying the Brunn-Minkowski inequality (see
 \cite[Thm.~8.1]{Gruber2007}) to derive
 $\lE_n(P)=\vol(P)\leq\frac1{2^n}\vol(DP)$. And secondly, in
 \cite{HenkSchuermannWills2005} it was proved that $\lE_{n-1}(P)\leq\sigma_{n-1}(P)$, for any lattice polytope $P\in\P^n_0$.

Unfortunately, for $i\neq n,n-1$, the inequalities do not hold in general.
\begin{proposition}\label{prop_badexample_coeff} 
Let $Q^n_l=\conv\left\{lC_{n-1}\times\{0\},\pm e_n\right\}$, where
$l\in\N$ and $C_n=[-1,1]^n$ is the cube of edge length 2 centered at
the origin. Then, for
 $n\geq3$ and any constant ${\rm c}$ there exists an $l\in\N$ such
 that  $\lE_{n-2}(Q^n_l)> {\rm c}\,\sigma_{n-2}(Q^n_l)$. If $n\geq4$, we have the same situation for $\lE_{n-3}(Q^n_l)$.
\end{proposition}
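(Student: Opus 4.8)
The plan is to exploit that $Q^n_l$ is a $0$-symmetric bipyramid whose base cube $lC_{n-1}$ grows with $l$ while its two apices $\pm e_n$ stay fixed, so that the Ehrhart coefficients $\lE_{n-2}$ and $\lE_{n-3}$ grow one power of $l$ faster in $l$ than the matching quantities $\sigma_{n-2}$ and $\sigma_{n-3}$, which forces the inequality to reverse for large $l$. First I would compute the successive minima. Since $Q^n_l\in\P^n_0$ we have $DQ^n_l=2Q^n_l$, and the dilate $\tfrac1l Q^n_l$ has cross-section $C_{n-1}$ in the hyperplane $x_n=0$, so each $e_i$, $1\le i\le n-1$, lies on its boundary, while $e_n$ is an apex of $Q^n_l$; a short verification shows that no strictly smaller dilate captures a nonzero lattice point in these directions. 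This yields $1/\lambda_i(DQ^n_l)=2l$ for $1\le i\le n-1$ and $1/\lambda_n(DQ^n_l)=2$. Substituting the vector with $n-1$ entries equal to $2l$ and one entry equal to $2$ into the elementary symmetric polynomials, and separating the terms that do and do not use the last entry, gives
\begin{equation*}
\sigma_{n-2}(Q^n_l)=(n-1)(2l)^{n-2}+(n-1)(n-2)(2l)^{n-3},\qquad \sigma_{n-3}(Q^n_l)=\binom{n-1}{2}(2l)^{n-3}+2\binom{n-1}{3}(2l)^{n-4},
\end{equation*}
which are polynomials in $l$ of degrees $n-2$ and $n-3$ with positive leading coefficients.

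Next I would determine the relevant Ehrhart coefficients by slicing $kQ^n_l$ with the integer hyperplanes $x_n=j$, $-k\le j\le k$. The slice at height $j$ is the cube $l(k-|j|)\,C_{n-1}$, which contains exactly $(2l(k-|j|)+1)^{n-1}$ lattice points, so that
\begin{equation*}
\LE(kQ^n_l)=2\sum_{a=0}^{k}(2la+1)^{n-1}-(2lk+1)^{n-1}.
\end{equation*}
Expanding $(2la+1)^{n-1}$ by the binomial theorem and summing each power $a^m$ by Faulhaber's formula turns the right-hand side into an explicit polynomial in $k$, and reading off the coefficient of $k^i$ expresses $\lE_i(Q^n_l)$ as a polynomial in $l$. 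The key structural point is that the subtracted term $(2lk+1)^{n-1}$ contributes only $\binom{n-1}{i}(2l)^i$ to the coefficient of $k^i$, i.e. something of $l$-degree $i$, so for $i\le n-2$ it can at most cancel a lower-degree part of the contribution coming from $\sum_a$. I expect the computation to yield leading behaviour
\begin{equation*}
\lE_{n-2}(Q^n_l)=\tfrac{n-1}{6}(2l)^{n-1}+(\text{lower order in }l),\qquad \lE_{n-3}(Q^n_l)=\tfrac{(n-1)(n-2)}{6}(2l)^{n-2}+(\text{lower order in }l),
\end{equation*}
so that $\lE_{n-2}$ has $l$-degree $n-1$ and $\lE_{n-3}$ has $l$-degree $n-2$, each exceeding the degree of its companion $\sigma$ by exactly one.

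Finally, comparing degrees gives $\lE_{n-2}(Q^n_l)/\sigma_{n-2}(Q^n_l)=\Theta(l)$ for $n\ge3$ and $\lE_{n-3}(Q^n_l)/\sigma_{n-3}(Q^n_l)=\Theta(l)$ for $n\ge4$, both tending to infinity; hence for any constant ${\rm c}$ a sufficiently large $l\in\N$ forces $\lE_{n-2}(Q^n_l)>{\rm c}\,\sigma_{n-2}(Q^n_l)$, and likewise for $\lE_{n-3}$ once $n\ge4$ so that $n-3\ge1$. The main obstacle is the bookkeeping in extracting the $k^i$-coefficients: the naively dominant term in $\lE_{n-3}$ would carry the factor $(2l)^{n-1}$ from the top binomial summand, but its coefficient is a multiple of the Bernoulli number $B_3=0$ and therefore vanishes, dropping the $l$-degree from $n-1$ to $n-2$. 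One must track this vanishing carefully and check that the surviving leading coefficient remains positive; this same Bernoulli cancellation explains why the statement stops at $i=n-3$, since for $i=n-4$ the analogous leading coefficient carries $B_4<0$, making $\lE_{n-4}(Q^n_l)$ eventually negative so that no counterexample arises there.
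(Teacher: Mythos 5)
Your proposal is correct and follows essentially the same route as the paper: slicing $kQ^n_l$ orthogonally to $e_n$, applying Faulhaber's formula to extract $\lE_{n-2}$ and $\lE_{n-3}$ as polynomials in $l$ of degrees $n-1$ and $n-2$, computing $\lambda_1=\dots=\lambda_{n-1}=1/l$, $\lambda_n=1$, and comparing degrees. The leading terms you predict, $\tfrac{n-1}{6}(2l)^{n-1}$ and $\tfrac{(n-1)(n-2)}{6}(2l)^{n-2}$, agree with the paper's exact formulas $\lE_{n-2}=(n-1)(2l)^{n-3}(\tfrac23 l^2+1)$ and $\lE_{n-3}=\tfrac23\binom{n-1}{2}(2l)^{n-4}(2l^2+1)$, and you correctly identify the $B_3=0$ cancellation that the paper encodes as $P(n-2,n)=0$.
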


The proof of this statement is given at the end of the paper.
In this work we show that for special classes of lattice polytopes,
however,  the coefficient-wise approach leads to positive results.\par

One of these classes is the family of lattice zonotopes. In general, a zonotope $Z$ is the Minkowski sum of finitely many line segments, 
that is, there is a set of vectors $v_1,\dots,v_m\in\R^n$ and points $p_1,\dots,p_m\in\R^n$ such that
\begin{equation*}
Z=\sum_{i=1}^m[p_i,p_i+v_i]=\left\{\sum_{i=1}^m (p_i+\alpha_i\,v_i) : 0\leq \alpha_i \leq 1\right\}.
\end{equation*}
Particularly, zonotopes possess a center of symmetry and furthermore are characterized in the class of centrally symmetric polytopes by the property that all two-dimensional faces are themselves centrally symmetric (see \cite[Thm.~3.3]{Bolker1969}). Zonotopes appear in many different contexts, for instance, in the theory of hyperplane arrangements (cf. \cite[Lect.~7]{Ziegler1995}) and in problems on approximation of convex bodies (cf. \cite[Sect.~15.2]{Gruenbaum2003}).\par
Since we are only interested in lattice zonotopes, i.e.,
$p_i,v_i\in\Z^n$,  and since  \eqref{eq:conj_genP} is invariant under
translations by lattice vectors, we can simply consider lattice
zonotopes  given as the sum of line segments $[0,v_i]$, with
$v_i\in\Z^n$. Our first result shows that for any lattice
parallelepiped $Z$ the coefficient-wise inequalities hold true and, in
particular, we obtain \eqref{eq:conj_genP}. 

\begin{theorem}\label{thm_parall}
Let $Z\in\P^n$ be an $n$-dimensional lattice parallelepiped.  Then
\begin{equation*}
 \lE_i(Z)\leq \sigma_i\left(Z\right),\quad
i=0,\dots,n.
\end{equation*}
\end{theorem}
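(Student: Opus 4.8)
The plan is to prove the theorem for a lattice parallelepiped $Z$ by understanding both $\lE_i(Z)$ and $\sigma_i(Z)$ very explicitly in terms of the generating lattice vectors, and then comparing them term by term. Write $Z=\sum_{i=1}^n[0,v_i]$ with $v_1,\dots,v_n\in\Z^n$ linearly independent (the $n$-dimensional, full-rank case), and let $V=(v_1,\dots,v_n)\in\Z^{n\times n}$ be the matrix whose columns are the $v_i$. The first task is to compute the Ehrhart polynomial of a parallelepiped. Since $Z=V\,[0,1]^n$ is the image of the unit cube under the linear map $V$, and the lattice point enumerator of $kZ$ counts $\Z^n\cap V k[0,1]^n$, one expects a clean product formula. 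I would aim to show
\begin{equation*}
\lE_i(Z)=\sigma_i\bigl(|\det V_1|,\dots,|\det V_n|\bigr)\cdot(\text{appropriate normalization}),
\end{equation*}
or more realistically, to exploit the fact that a half-open parallelepiped tiles space under the lattice it generates, so that $\LE(kZ)$ decomposes combinatorially over the faces of $Z$. Concretely, each $i$-dimensional face of $Z$ is itself (a translate of) a parallelepiped generated by an $i$-subset of $\{v_1,\dots,v_n\}$, and summing Ehrhart contributions over faces should express $\lE_i(Z)$ as a sum over $i$-subsets $I\subseteq[n]$ of the relative volumes of the faces $\sum_{j\in I}[0,v_j]$.

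Having the face decomposition in hand, I would identify $\lE_i(Z)=\sum_{I\subseteq[n],\,\#I=i}\det(\Lambda_I)$, where $\Lambda_I$ is the lattice generated by $\{v_j:j\in I\}$ inside the linear span $U_I=\lin\{v_j:j\in I\}$, and $\det(\Lambda_I)$ is the $i$-dimensional volume of the parallelepiped they span (equivalently, the $i$-volume of that face). This mirrors the shape of $\sigma_i(Z)=\sum_{\#I=i}\prod_{j\in I}\frac{1}{\lambda_j(DZ)}$ exactly, reducing the theorem to the per-subset comparison
\begin{equation*}
\det(\Lambda_I)\leq\prod_{j\in I}\frac{1}{\lambda_{r(j)}(DZ)},
\end{equation*}
for a suitable ordering of the indices. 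The key point connecting the two sides is that $DZ=Z-Z=\sum_{i=1}^n[-v_i,v_i]=2\,\sum_{i=1}^n[0,v_i]$ is again a parallelepiped (a scaled translate of $Z$), so its successive minima with respect to $\Z^n$ are governed by the same generators $v_i$. In fact for a symmetric parallelepiped I expect $\lambda_j(DZ)$ to be realized along the generator directions, giving $\prod_{j=1}^n\frac{1}{\lambda_j(DZ)}=2^{-n}\vol(DZ)/\det(\Z^n)=\vol(Z)=\det(\Lambda_{[n]})$, which is precisely the top case $i=n$ and confirms that Minkowski's theorem holds with equality for parallelepipeds.

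The main obstacle I anticipate is the middle coefficients: establishing the subset-wise inequality $\det(\Lambda_I)\le\prod_{j\in I}\lambda_{r(j)}(DZ)^{-1}$ for every $I$ simultaneously, with a single consistent assignment of the minima. The difficulty is that the successive minima $\lambda_j(DZ)$ are global invariants of the whole parallelepiped, whereas $\det(\Lambda_I)$ is a local quantity attached to a sublattice $\Lambda_I$; one cannot naively factor a global minimum onto individual generators, because $\Lambda_I$ need not be the intersection $\Z^n\cap U_I$ (there may be extra lattice points in the span of a proper generator subset). To bridge this I would first reduce, via a unimodular change of basis as the excerpt permits, to a convenient normal form for $V$ — for instance an upper-triangular (Hermite normal form) representation — in which the diagonal entries control both the volumes of coordinate subparallelepipeds and the relevant minima. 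Then I would show that the successive minima of the symmetric parallelepiped $DZ$ are, up to a factor of $2$, the reciprocals of these diagonal entries sorted appropriately, and invoke the superadditivity/monotonicity of elementary symmetric functions together with the index-matching inequality of successive minima (each $\lambda_j\le\lambda_{j+1}$) to push the per-subset estimate through. The cleanest route may well be to prove the stronger statement that $\lE_i(Z)=\sigma_i(Z)$ holds with \emph{equality} when the generators form a basis adapted to the minima, and that passing to a general integral basis only increases the left-hand side by at most the right, thereby yielding the inequality in all cases.
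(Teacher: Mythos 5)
Your overall architecture matches the paper's: express $\lE_i(Z)$ as a sum over $i$-element subsets $I$ of the generators of the \emph{relative} volumes $\vol_i(P_I)/\det(\Z^n\cap\lin\{v_j:j\in I\})$ (this is Proposition \ref{prop_g_i}; note that your displayed identification $\lE_i(Z)=\sum_I\det(\Lambda_I)$ with the raw Euclidean face volume is not correct --- e.g.\ for $v_1=(1,1)$, $v_2=(0,1)$ one has $\lE_1=2\neq 1+\sqrt{2}$ --- the normalizing lattice must be the \emph{induced} lattice $\Z^n\cap U_I$, not the one generated by the $v_j$), and then match each subset $I$ against a subset of the successive minima. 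However, the mechanism you propose for the per-subset inequality fails. Your plan rests on the claim that, after passing to Hermite normal form, the successive minima of $DZ$ are essentially the reciprocals of the diagonal entries, and in particular that $\prod_{j=1}^n\lambda_j(DZ)^{-1}=\vol(Z)$, i.e.\ that Minkowski's second theorem holds with equality for all lattice parallelepipeds. This is false. Take $v_1=(1,0)$, $v_2=(1,2)$: a point $(x,y)$ lies in $tDZ$ iff $|y|\le 2t$ and $|x-y/2|\le t$, so $(0,1)$ and $(1,1)$ both lie in $\tfrac12 DZ$ and $\lambda_1(DZ)=\lambda_2(DZ)=\tfrac12$, whereas the Hermite diagonal entries are $1,2$ and $\vol(Z)=2<4=\prod_j\lambda_j(DZ)^{-1}$. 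The successive minima of $DZ$ are simply not attained in generator directions, and this is precisely the difficulty of the theorem; no change of basis makes the generators ``adapted to the minima,'' so your proposed reduction to an equality case plus monotonicity has no starting point.

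What the paper does instead, and what your sketch is missing, are two genuinely nontrivial lemmas. First, Minkowski's second theorem is applied \emph{separately to each section}: $\vol_i(DP_J)/(2^i\det\Lambda_J)\le\prod_{j=1}^i\lambda_j(DP_J,\Lambda_J)^{-1}$, which converts each relative face volume into a product of minima of the lower-dimensional body $DP_J$ with respect to $\Lambda_J=\lin\{v_j:j\in J\}\cap\Z^n$. Second, to compare these \emph{local} minima with the global $\lambda_k(DZ)$, one needs (a) an exterior-algebra argument (Lemma \ref{lem:bijection}) producing a bijection $\phi:\binom{[n]}{i}\to\binom{[n]}{n-i}$ such that $\{v_j:j\in J\}\cup\{a_k:k\in\phi(J)\}$ is always a basis, where $a_1,\dots,a_n$ attain the minima of $DZ$, and (b) a transversality lemma (Lemma \ref{lem:ineq_lambdas}) showing that if $\lin\{a_k:k\in\phi(J)\}$ meets $\lin\{v_j:j\in J\}$ only in the origin, then $\prod_{j=1}^i\lambda_j(DZ\cap\lin\{v_j:j\in J\},\Z^n\cap\lin\{v_j:j\in J\})\ge\prod_{k\notin\phi(J)}\lambda_k(DZ)$. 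The bijection $\phi$ is exactly the ``suitable ordering $r$'' you postulate, but you give no construction of it and no argument for the per-subset inequality; as it stands the proposal has a genuine gap at its central step.
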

We note that these inequalities are best possible. For instance, consider
the cube $Z=[0,1]^n=\sum_{i=1}^n [0,e_i]$, where $e_i$ denotes the $i$th standard unit vector. We have
$\lambda_i(DZ)=\lambda_i([-1,1]^n)=1$, and $\LE(k\,Z)=(k+1)^n$ for any integer
$k\in\N$; thus  
$\lE_i(Z)=\binom{n}{i}=\sigma_i(Z)$. 
For general lattice zonotopes $Z$ we obtain a relation up to a factor depending only on the dimension and not on the number of generators.

\begin{theorem}\label{thm_gi_bound}
Let $Z\in\P^n$ be an $n$-dimensional zonotope. Then 
\begin{equation*}
\frac{\lE_i(Z)}{\vol(Z)}\leq\frac{n!}{i!}\prod_{j=i+1}^n\lambda_j(DZ),\quad
i=0,\dots,n.
\end{equation*}
In particular, we get $\lE_i(Z)\leq\frac{n!}{i!}\sigma_i\left(Z\right)$.
\end{theorem}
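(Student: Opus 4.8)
My plan is to combine Stanley's formula for the Ehrhart polynomial of a lattice zonotope with a projection identity for its coefficients, and then to bound the arising projected volumes from below by the geometry of numbers. Write $Z=\sum_{k=1}^m[0,v_k]$ with $v_k\in\Z^n$. By Stanley's formula the Ehrhart coefficients are
\[ \lE_i(Z)=\sum_S g(S), \]
the sum ranging over all linearly independent $i$-element subsets $S\subseteq\{v_1,\dots,v_m\}$, where $g(S)$ is the greatest common divisor of the maximal minors of the matrix with columns $v_k$, $k\in S$; equivalently $g(S)$ equals the relative volume of $\sum_{k\in S}[0,v_k]$ in the sublattice $\Z^n\cap\lin(S)$. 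In particular $\vol(Z)=\lE_n(Z)=\sum_{|T|=n}|\det(v_T)|$.

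The first step is a projection identity. Fix a linearly independent $S$ with $|S|=i$, let $\pi_S$ denote the projection onto $\R^n/\lin(S)$, and set $\bar\Lambda_S=\pi_S(\Z^n)$, a lattice of rank $n-i$. I would first establish the multiplicativity $g(T)=g(S)\,\bar g(T\setminus S)$ for every basis $T\supseteq S$, where $\bar g$ is the relative volume in $\bar\Lambda_S$; this follows by decomposing $|\det(v_T)|$ into the relative volume of $S$ times the projected relative volume of $T\setminus S$, together with the identity $\det(\Z^n\cap\lin(S))\cdot\det(\bar\Lambda_S)=1$ valid for rational $\lin(S)$. Applying Stanley's formula to $\pi_S(Z)$ in the quotient then gives $\sum_{T\supseteq S}g(T)=g(S)\,\vol(\pi_S(Z);\bar\Lambda_S)$. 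Summing over all independent $i$-subsets $S$ and observing that each basis $T$ is counted once for each of its $\binom{n}{i}$ subsets of size $i$, I obtain
\[ \binom{n}{i}\vol(Z)=\sum_S g(S)\,\vol(\pi_S(Z);\bar\Lambda_S). \]

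The second step is a lower bound for each projected volume that is uniform in $S$. Since $DZ=2Z-\sum_k v_k$, the projection $\pi_S(Z)$ is a translate of $\tfrac12\pi_S(DZ)$, hence $\vol(\pi_S(Z);\bar\Lambda_S)=2^{-(n-i)}\vol(\pi_S(DZ);\bar\Lambda_S)$. Because a $0$-symmetric body contains the cross-polytope spanned by the vectors realizing its successive minima, I get the reverse Minkowski estimate
\[ \vol(\pi_S(DZ);\bar\Lambda_S)\ge\frac{2^{n-i}}{(n-i)!}\prod_{l=1}^{n-i}\frac{1}{\lambda_l(\pi_S(DZ),\bar\Lambda_S)}. \]
The monotonicity of successive minima under projection, $\lambda_l(\pi_S(DZ),\bar\Lambda_S)\le\lambda_{l+i}(DZ)$ for $1\le l\le n-i$ --- which I would prove by projecting the $l+i$ independent vectors realizing the first $l+i$ minima of $DZ$ and noting that at most $i$ of the images can collapse, so at least $l$ remain independent --- yields $\prod_{l=1}^{n-i}\lambda_l(\pi_S(DZ),\bar\Lambda_S)\le\prod_{j=i+1}^n\lambda_j(DZ)$. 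Combining the last three relations gives the uniform bound $\vol(\pi_S(Z);\bar\Lambda_S)\ge\big((n-i)!\prod_{j=i+1}^n\lambda_j(DZ)\big)^{-1}$.

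Inserting this into the projection identity finishes the main inequality:
\[ \lE_i(Z)=\sum_S g(S)\le(n-i)!\prod_{j=i+1}^n\lambda_j(DZ)\sum_S g(S)\,\vol(\pi_S(Z);\bar\Lambda_S)=\frac{n!}{i!}\prod_{j=i+1}^n\lambda_j(DZ)\,\vol(Z). \]
For the concluding inequality I would use $\vol(Z)=2^{-n}\vol(DZ)$ and Minkowski's second theorem (Theorem \ref{thm_mink}) to get $\vol(Z)\le\prod_{j=1}^n\lambda_j(DZ)^{-1}$, whence $\lE_i(Z)\le\frac{n!}{i!}\prod_{j=1}^i\lambda_j(DZ)^{-1}\le\frac{n!}{i!}\sigma_i(Z)$, the last step holding because $\prod_{j=1}^i\lambda_j(DZ)^{-1}$ is one of the nonnegative summands of $\sigma_i(Z)$. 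I expect the main obstacle to be the first step: making Stanley's formula and the index-multiplicativity precise enough that the projection identity holds with the correct relative-volume normalization. The two geometry-of-numbers estimates in the second step are comparatively standard.
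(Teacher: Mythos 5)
Your proof is correct and takes essentially the same route as the paper's: your projection identity $\binom{n}{i}\vol(Z)=\sum_S g(S)\,\vol(\pi_S(Z);\bar\Lambda_S)$ is exactly the paper's decomposition $\vol(P_J)=\vol_i(P_I)\,\vol_{n-i}(P_J|L_I^\bot)$ combined with $\det(\Z^n\cap L_I)\det(\Z^n|L_I^\bot)=1$, and the two geometry-of-numbers estimates (the lower Minkowski bound and $\lambda_l(\pi_S(DZ),\bar\Lambda_S)\le\lambda_{l+i}(DZ)$) are precisely the ones the paper invokes. The only cosmetic difference is that you work in the quotient $\R^n/\lin(S)$ rather than with orthogonal projections onto $L_S^\bot$.
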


The second class of polytopes that we consider was introduced by Liu
\cite{Liu2009}, the so called {\em lattice-face polytopes}. In order
to state the definition, let $\pi^{(n-i)}:\R^n\to\R^i$ be the
projection that forgets the last $n-i$ coordinates, $i=1,\dots,n$,
where $\pi^{(0)}$ denotes the identity.    
\begin{definition}[Lattice-face polytopes]\label{def_latface}
A polytope $P\in\P^n$ is called a {\em lattice-face polytope}, if for any $0\leq k\leq n-1$ and any subset $U\subset\mathrm{vert}(P)$ that spans a $k$-dimensional affine space, $\pi^{(n-k)}(\aff(U)\cap\Z^n)=\Z^k$.
\end{definition}

For example, any integral cyclic polytope, i.e., the convex hull of finitely many
lattice points on the moment curve $t\mapsto(t,t^2,\ldots,t^n)$, is
lattice-face (cf.~\cite{BDDPS2005, Liu2009}). In \cite{Liu2009} it is
also shown that lattice-face polytopes are necessarily lattice polytopes and moreover, that every combinatorial type of a rational polytope has a representative among lattice-face polytopes.

\begin{theorem}\label{thm_latface}
Let $P\in\P^n$ be a lattice-face polytope.
\begin{itemize}
 \item[\romannumeral1)] If $P$ is $0$-symmetric, then, for $1\leq i\leq n$,
  \begin{equation*}
    \lE_i(P)\leq \sigma_i\left(P\right).
  \end{equation*}
 \item[\romannumeral2)] If $0\in\mathrm{vert}(P)$ and $SP=\conv(P,-P)$, then, for $1\leq i\leq n$,
  \begin{equation*}
    \lE_i(P)\leq \sigma_i\left(\frac{2}{\lambda_1(SP)},\dots,\frac{2}{\lambda_n(SP)}\right).
  \end{equation*}
\end{itemize}
\end{theorem}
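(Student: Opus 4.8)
The plan is to reduce both statements to Minkowski's second theorem applied in the projected space $\R^i$, using the characteristic feature of lattice-face polytopes established by Liu \cite{Liu2009}: for such a polytope the $i$th Ehrhart coefficient equals the $i$-dimensional volume of the coordinate projection, i.e.
\[
\lE_i(P)=\vol_i\bigl(\pi^{(n-i)}(P)\bigr),\qquad 1\le i\le n.
\]
Granting this, the whole problem becomes one of bounding $\vol_i(\pi^{(n-i)}(P))$ by the successive minima of $P$ (resp.\ $SP$) living in $\R^n$. The bridge between the two dimensions will be a monotonicity statement: for the coordinate projection $\pi^{(n-i)}$ and a suitable $0$-symmetric body $C\subset\R^n$,
\[
\lambda_j\bigl(\pi^{(n-i)}(C),\Z^i\bigr)\ge\lambda_j(C,\Z^n),\qquad 1\le j\le i.
\]
I expect this inequality to carry the entire weight of the argument, and the lattice-face hypothesis to be exactly what makes it true.

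Assuming the monotonicity inequality, the two parts follow quickly. For part \romannumeral1), $P$ and hence $Q:=\pi^{(n-i)}(P)$ are $0$-symmetric, so Theorem \ref{thm_mink} applied to $Q$ in $\R^i$ gives $\vol_i(Q)\le 2^i/\prod_{j=1}^i\lambda_j(Q,\Z^i)$; feeding in the monotonicity inequality with $C=P$ and using $DP=2P$ (so that $1/\lambda_j(DP)=2/\lambda_j(P)$) yields
\[
\lE_i(P)=\vol_i(Q)\le\prod_{j=1}^i\frac{2}{\lambda_j(P)}=\prod_{j=1}^i\frac{1}{\lambda_j(DP)}.
\]
For part \romannumeral2), $Q$ need no longer be symmetric, so I replace it by $\pi^{(n-i)}(SP)=\conv(Q,-Q)$, which by linearity of $\pi^{(n-i)}$ is $0$-symmetric and contains $Q$; Theorem \ref{thm_mink} together with the monotonicity inequality for $C=SP$ then gives
\[
\lE_i(P)=\vol_i(Q)\le\vol_i\bigl(\pi^{(n-i)}(SP)\bigr)\le\prod_{j=1}^i\frac{2}{\lambda_j(SP)}.
\]
In both cases the resulting product is the single largest term of the corresponding $\sigma_i$ (the minima are non-decreasing, so the reciprocals are largest on the index set $\{1,\dots,i\}$), and since every term of $\sigma_i$ is non-negative, the product is bounded by $\sigma_i(P)$, respectively by $\sigma_i\bigl(2/\lambda_1(SP),\dots,2/\lambda_n(SP)\bigr)$, as claimed.

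The crux is the monotonicity inequality, and this is where the lattice-face hypothesis must enter. Set $\mu=\lambda_j(\pi^{(n-i)}(C),\Z^i)$, so that $\pi^{(n-i)}(\mu C)=\mu\,\pi^{(n-i)}(C)$ contains $j$ linearly independent lattice points $w_1,\dots,w_j\in\Z^i$. If each $w_l$ lifts to a lattice point $x_l\in\mu C\cap\Z^n$ with $\pi^{(n-i)}(x_l)=w_l$, then $x_1,\dots,x_j$ are automatically linearly independent, so $\mu C$ contains $j$ linearly independent lattice points and $\lambda_j(C,\Z^n)\le\mu$, which is the claim. Such a lifting is false for an arbitrary $0$-symmetric body---a sufficiently long box tilted against the coordinate flag already breaks it---so the whole difficulty is concentrated in showing that the fibre $\mu C\cap\pi^{-1}(w_l)$ always meets the lattice coset $\{w_l\}\times\Z^{n-i}$. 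I expect this to be the main obstacle, and I would attack it one coordinate at a time: for a single-coordinate projection the fibre is a segment whose endpoints lie on proper faces of $C$, and Definition \ref{def_latface} controls precisely how the affine hulls of those faces meet $\Z^n$, which is what forces a lattice point into the fibre; since Liu's analysis \cite{Liu2009} also shows that $\pi^{(1)}(P)$ is again a lattice-face polytope, the estimate can then be iterated down to $\R^i$. The genuinely delicate point, which I would treat separately, is the lifting for $C=SP$ in part \romannumeral2): because $\conv(P,-P)$ need not itself be lattice-face, the fibre control there has to be extracted from the lattice-face property of $P$ (and of $-P$) through the facial structure of $SP$.
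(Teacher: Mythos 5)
Your overall strategy coincides with the paper's: Liu's formula $\lE_i(P)=\vol_i(\pi^{(n-i)}(P))$ (Theorem \ref{thm_liu}), the projection monotonicity $\lambda_j(\pi^{(n-i)}(P),\Z^i)\ge\lambda_j(P)$ (the paper's Lemma \ref{lem_latface}), Minkowski's second theorem applied in $\R^i$, and the observation that $\prod_{j=1}^i 2/\lambda_j$ is a single summand of the corresponding $\sigma_i$. The deduction of both parts from the monotonicity inequality is complete and matches the paper essentially verbatim, including the device $P\subset SP$ for part \romannumeral2).

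The gap is in your sketch of the monotonicity inequality itself. You propose to find a lattice point in the fibre $\mu C\cap\pi^{-1}(w)$ by looking at the endpoints of that segment, which lie on facets of $\mu C$, and invoking Definition \ref{def_latface} for the affine hulls of those facets. But the lattice-face property (equivalently Lemma \ref{lem_prop_latfac} \romannumeral3)) controls $\aff(U)\cap\Z^n$ only for vertex subsets $U$ of $P$ itself, whereas the fibre endpoints lie on $\mu$-dilates of facets; for the non-integral $\mu=\lambda_j(\pi(P),\Z^{n-1})$ that actually occurs, $\aff(\mu F)=\mu\,\aff(F)$ is not the affine hull of any vertex subset unless $\aff(F)$ passes through the origin, so the hypothesis gives you nothing about it. The paper's proof of Lemma \ref{lem_latface} circumvents exactly this: it writes $w=\mu\sum_k\gamma_k v_k$ with $\sum_k\gamma_k\le 1$ over linearly independent vertices $v_k$ of $\pi(P)$, lifts these to vertices $\bar v_k$ of $P$, and works with the hyperplane $H=\aff\{0,\bar v_1,\dots,\bar v_{n-1}\}=\lin\{\bar v_1,\dots,\bar v_{n-1}\}$. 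Because $H$ contains the origin it is invariant under dilation by $\mu$, and because $P=-P$ (resp.\ $0\in\mathrm{vert}(P)$ together with $\mathrm{vert}(SP)\subseteq\{\pm v: v\in\mathrm{vert}(P)\}$ in part \romannumeral2)) it is spanned by vertices, so Lemma \ref{lem_prop_latfac} \romannumeral3) produces a lattice point $\bar w=\pi^{-1}(w)\cap H$, which the coefficient computation $\bar w=\mu\sum_k\gamma_k\bar v_k$ places inside $\mu P$. This is precisely where the symmetry hypotheses of the theorem are consumed; your sketch senses that they matter, especially for the $SP$ case, but does not isolate the scale-invariance issue that they resolve, so as written the key lemma remains unproved.
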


The paper is organized as follows. In Section 2 a geometric
description of the Ehrhart coefficients of lattice zonotopes is
discussed and the proofs of Theorem \ref{thm_parall} and
\ref{thm_gi_bound} are given. Also, some further results on
coefficient-wise inequalities are described, which are obtained by
adding some extra conditions on the generators. In Section 3 we give
a brief introduction to lattice-face polytopes and the proof of
Theorem \ref{thm_latface}. We close the paper with the proof of Proposition \ref{prop_badexample_coeff}.

\section{Lattice zonotopes}

Let $v_1,\dots,v_m\in\Z^n$ and consider $Z=\sum_{i=1}^m[0,v_i]$. 
Concerning the coefficients $\lE_i(Z)$ of the Ehrhart polynomial of $Z$, Betke and Gritzmann  \cite{BetkeGritzmann1986} showed that
\begin{equation}
   \lE_i(Z)=\sum_{F\in\fac_i(Z)} \gamma(F,P)\,\frac{\vol_i(F)}{\det(\aff F\cap\Z^n)},
\label{eq:bg_zonotope}
\end{equation}
where $\fac_i(Z)$ denotes the set of all $i$-faces of $Z$,
$\gamma(F,P)$ the external angle of $F$ at $P$
(cf. \cite[p.~308]{Gruenbaum2003}), 
and $\det(\aff F\cap\Z^n)$ the determinant of the sublattice of $\Z^n$ contained in the affine hull of $F$. Another presentation was given by Stanley \cite[Exer.~31, p.~272]{Stanley1997}
\begin{equation}
   \lE_i(Z)=\sum_{X\in\set_i(Z)} \gcd(i\text{-minors of }X),
\label{eq:stanley_zonotope}
\end{equation}
where $\set_i(Z)$ denotes the set of all linearly independent $i$-element subsets of $\{v_1,\dots,v_m\}$ and $\gcd(a_1,\dots,a_k)$ is the greatest common divisor of the integers $a_1,\dots,a_k\in\Z$. From \eqref{eq:stanley_zonotope} -- as well as \eqref{eq:bg_zonotope} --
we can get a slightly more geometric description: To this end we
denote for a given $J\subseteq[m],\,\#J=i$, the zonotope
generated by the vectors $v_{j}$, $j\in J$, by $P_J$, that is,
$P_J=\sum_{j\in J}[0,v_j]=\left\{\sum_{j\in J} \mu_j\,v_j : 0\leq \mu_j\leq 1\right\}$.

\begin{proposition}\label{prop_g_i}
For $1\leq i\leq n$ we have
\begin{equation*}
 \lE_i(Z)=\sum_{J\subseteq[m],\,\#J=i} \frac{\vol_i(P_J)}{\det(\lin P_J \cap\Z^n)}.
\end{equation*}
\end{proposition}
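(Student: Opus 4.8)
The plan is to read the identity directly off Stanley's formula~\eqref{eq:stanley_zonotope}. The first thing to notice is that the two sums have different index sets: \eqref{eq:stanley_zonotope} runs over the linearly independent $i$-element subsets gathered in $\set_i(Z)$, whereas the proposition sums over \emph{all} $J\subseteq[m]$ with $\#J=i$. This discrepancy is harmless. If the vectors $\{v_j:j\in J\}$ are linearly dependent, then $P_J$ lies in a subspace of dimension strictly less than $i$, so $\vol_i(P_J)=0$ and the corresponding summand vanishes. Hence the sum in the proposition collapses onto the linearly independent subsets, and it suffices to match the two expressions term by term over $\set_i(Z)$.

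So I fix a linearly independent $J$ and let $X$ be the $n\times i$ integer matrix whose columns are the $v_j$, $j\in J$. Write $\Lambda_J=X\Z^i$ for the lattice generated by these columns, and let $L=\lin P_J\cap\Z^n$ be its saturation in $\Z^n$, a primitive sublattice of rank $i$. Since the $v_j$ form a basis of $\Lambda_J$, the parallelepiped $P_J$ is a fundamental cell of $\Lambda_J$, so $\vol_i(P_J)=\det(\Lambda_J)$. As $\Lambda_J\subseteq L$ and covolumes scale with the index, we have $\det(\Lambda_J)=[L:\Lambda_J]\,\det(L)$. Combining these gives
\begin{equation*}
\frac{\vol_i(P_J)}{\det(\lin P_J\cap\Z^n)}=\frac{\det(\Lambda_J)}{\det(L)}=[L:\Lambda_J],
\end{equation*}
so the proposition reduces to the purely arithmetic identity $\gcd(i\text{-minors of }X)=[L:\Lambda_J]$.

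To prove this last identity I would bring $X$ into Smith normal form $X=UDV$, with $U\in\mathrm{GL}_n(\Z)$, $V\in\mathrm{GL}_i(\Z)$, and $D$ carrying the invariant factors $d_1\mid\dots\mid d_i$ down its diagonal. On the one hand, the greatest common divisor of the $i\times i$ minors of $X$ is unchanged under multiplication by unimodular matrices and equals the top determinantal divisor $d_1\cdots d_i$. On the other hand, since $V$ is unimodular we have $\Lambda_J=UD\Z^i$, so after applying $U^{-1}$ the lattice $\Lambda_J$ has basis $d_1e_1,\dots,d_ie_i$ while its saturation $L$ has basis $e_1,\dots,e_i$; hence $[L:\Lambda_J]=d_1\cdots d_i$ as well. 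Equating the two expressions completes the term-by-term comparison, and with it the proposition. (In exterior-algebra language one can obtain the same conclusion in one stroke: the vector of $i$-minors of $X$ is the coordinate vector of $v_{j_1}\wedge\dots\wedge v_{j_i}\in\bigwedge^i\Z^n$, and writing it against a basis of the primitive lattice $L$ exhibits it as $[L:\Lambda_J]$ times a primitive vector.)

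The only genuine content is the arithmetic identity $\gcd(i\text{-minors of }X)=[L:\Lambda_J]$; the reduction of the index set and the covolume bookkeeping are routine. I expect the main care to be needed precisely here, namely in justifying that the determinantal divisor of the maximal minors is exactly $\prod_j d_j$ and that the saturation $L$ is read off correctly from the Smith normal form. In short, the one point to keep straight throughout is the distinction between the lattice $\Lambda_J$ generated by the $v_j$ and its saturation $L=\lin P_J\cap\Z^n$.
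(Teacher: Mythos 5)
Your proof is correct and follows essentially the same route as the paper: both reduce Stanley's formula \eqref{eq:stanley_zonotope} to the arithmetic identity that the gcd of the $i$-minors of the generator matrix equals the index of the lattice $\Lambda_J$ generated by the $v_j$, $j\in J$, in its saturation $\lin P_J\cap\Z^n$. The only difference is in proving that identity, where you invoke the Smith normal form while the paper argues directly via the factorization $V_J=\bar{V}D_J$ and Laplace expansion; this is a cosmetic rather than structural difference.
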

\begin{proof}
If the vectors $v_{j}$, $j\in J$, are linearly dependent, then $\vol_i(P_J)=0$ and so any non-trivial contribution in that sum comes from an $i$-dimensional parallelepiped. The index of a sublattice $\Lambda'$ of $\Lambda\in\L^n$ is defined as $\ind \Lambda'=\frac{\det \Lambda'}{\det \Lambda}$ (cmp.~\cite[Sect.~1.1]{Martinet2003}). Thus, by the definition of the determinant of a lattice, these non-trivial contributions are just the index of the sublattice generated by $v_{j}$, $j\in J$, with respect to the lattice $\lin P_J\cap\Z^n$.\par
Without loss of generality let $\{v_j: j\in J\}=\{v_1,\dots,v_i\}=V_J$ and let the vectors be linearly independent. First we observe that
\begin{equation}
 V_J \text{ is a lattice basis of }\lin V_J\cap\Z^n \Leftrightarrow \gcd(i\text{-minors of }V_J)=1.
\label{eq:claim1}
\end{equation}
For the ``if-part'' assume that $V_J$ is not a basis of $\lin
V_J\cap\Z^n$ but let $\bar{V}$ be an $n\times i$ matrix
whose columns constitute a basis of the lattice. Then there exists a
matrix $D_J\in\Z^{i\times i}$ with $V_J=\bar{V}\,D_J$ and so $|\det D_J|$ is a divisor of each $i$-minor of $V_J$. Since $|\det D_J|\geq 2$ we get the desired contradiction.
In order to show the ``only if-part'' we extend the vectors in $V_J$ to a basis $\tilde{V}$ of $\Z^n$ of determinant $1$. Developing that determinant with respect to the last $n-i$ columns yields
\begin{equation*}
1=\det\tilde{V}=\sum_{i\text{-minors $\mu_k$ of }V_J} \rho_k\,\mu_k
\end{equation*}
for some integers $\rho_k$. Hence, $\gcd(i\text{-minors of }V_J)=1$.

Next, let $\Lambda_J$ be the lattice generated by $v_1,\dots,v_i$. Then for the index of $\Lambda_J$ with respect to $\lin\,V_J\cap\Z^n$ holds
\begin{equation}
 \ind\,\Lambda_J=\gcd(i\text{-minors of }V_J).
\label{eq:claim2}
\end{equation}
To see this, we use the same notation as in the ''if-part'' above and have $V_J=\bar{V}\,D_J$. Since  $\det D_J=\ind \Lambda_J$ we conclude that $ \ind\,\Lambda_J$ is a divisor of $\gcd(i\text{-minors of }V_J)$. On the other hand we conclude from \eqref{eq:claim1} that $\gcd(i\text{-minors of }\bar{V})=1$ which implies the reverse divisibility. Obviously, \eqref{eq:claim2}, \eqref{eq:stanley_zonotope} and the observation at the beginning of the proof imply the assertion.
\end{proof}

Since $\vol(Z)=\lE_n(Z)$, Proposition \ref{prop_g_i}  is for $i=n$  just the
well-known volume formula of a zonotope $Z=\sum_{i=1}^m[0,w_i]$,
$w_i\in\R^n$, (cf.~\cite{Shephard1974})
\begin{equation}
\vol(Z)=\sum_{1\leq j_1<j_2<\cdots <j_n\leq m} |\det(w_{j_1},\dots,w_{j_n})|.
\label{eq:volumezonotope}
\end{equation}

In order to prove Theorem \ref{thm_parall} we need two auxiliary lemmas.
In the following, for a set $M$ and some $i\in\N$ we denote by $\binom{M}{i}$ the collection of all $i$-element subsets of $M$.
\begin{lemma}\label{lem:bijection}
Let $\{b_1,\dots,b_n\}$ and $\{a_1,\dots,a_n\}$ be two bases of an $n$-dimen\-sional vector space $V$, and let $i\in\{1,\dots,n-1\}$.
Then there exists a bijection $\phi:\binom{[n]}{i}\rightarrow\binom{[n]}{n-i}$ such that
$\{b_k:k\in I\}\cup\{a_j: j\in\phi(I)\}$ is a basis of $V$, for all $I\in\binom{[n]}{i}$.
\end{lemma}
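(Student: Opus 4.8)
The plan is to translate the statement into a question about matrices and then invoke the theory of compound matrices. First I would express the second basis in terms of the first: since both are bases of $V$, there is an invertible $n\times n$ matrix $M$ over the ground field with $a_j=\sum_{k=1}^n M_{kj}\,b_k$ for $j\in[n]$. Working in the coordinates given by $\{b_1,\dots,b_n\}$, the vector $b_k$ becomes the standard unit vector $e_k$ and $a_j$ becomes the $j$th column of $M$. Hence, for $I\in\binom{[n]}{i}$ and $J\in\binom{[n]}{n-i}$, the set $\{b_k:k\in I\}\cup\{a_j:j\in J\}$ is a basis of $V$ if and only if the $n\times n$ matrix formed by the columns $\{e_k:k\in I\}$ and $\{M_{\cdot j}:j\in J\}$ is nonsingular. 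Expanding its determinant along the unit-vector columns, this is equivalent to the non-vanishing of the complementary minor $\det M[{[n]\setminus I},J]$, where $M[R,J]$ denotes the submatrix of $M$ with row set $R$ and column set $J$.

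Writing $r=n-i$ and $R=[n]\setminus I$, the reformulated task reads: for every $r$-element row set $R$ choose an $r$-element column set $J$ with $\det M[R,J]\neq 0$, in such a way that the assignment $R\mapsto J$ is a bijection of $\binom{[n]}{r}$ onto itself. The key step is to consider the $r$th compound matrix $M^{(r)}$, the $\binom{n}{r}\times\binom{n}{r}$ matrix whose $(R,J)$-entry is precisely the minor $\det M[R,J]$. By the Sylvester--Franke theorem its determinant equals $(\det M)^{\binom{n-1}{r-1}}$, which is nonzero because $M$ is invertible.

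Now I would use the elementary fact that a nonsingular matrix has at least one nonzero term in the Leibniz expansion of its determinant: since $\det M^{(r)}\neq 0$, there is a permutation $\sigma$ of the index set $\binom{[n]}{r}$ with $\prod_{R}M^{(r)}_{R,\sigma(R)}\neq 0$, i.e.\ $\det M[R,\sigma(R)]\neq 0$ for every $R\in\binom{[n]}{r}$. This $\sigma$ is exactly a perfect matching between row sets and column sets with nonvanishing minors. Finally I would unwind the reductions: the map $I\mapsto[n]\setminus I$ is a bijection $\binom{[n]}{i}\to\binom{[n]}{n-i}$, so setting $\phi(I)=\sigma([n]\setminus I)$ yields a bijection $\binom{[n]}{i}\to\binom{[n]}{n-i}$ for which $\{b_k:k\in I\}\cup\{a_j:j\in\phi(I)\}$ is a basis by the equivalence established above. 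The main obstacle is the existence of the full bijection rather than of a single good $J$; I expect the decisive point to be the invertibility of the compound matrix, which produces the required global matching for free and sidesteps any direct verification of Hall's condition.
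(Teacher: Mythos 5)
Your proof is correct and is essentially the paper's argument in matrix language: the compound matrix $M^{(r)}$ of the change-of-basis matrix is, up to signs, exactly the paper's matrix with $(I,J)$-entry $b_I\wedge a_J$, and your appeal to the Sylvester--Franke theorem simply replaces the paper's direct exterior-algebra verification that this determinant is nonzero. The final step --- extracting the bijection from a nonvanishing term in the Leibniz expansion and composing with complementation --- is identical in both proofs.
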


\begin{proof}
We use a standard linear algebra argument involving the exterior
algebra $\Lambda(V)=\oplus_{i=0}^n \Lambda_i(V)$ of $V$ for which we
refer to \cite[Ch.~XVI]{BirkhoffMacLane1979}.
For all $I\in\binom{[n]}{i}$ and $J\in\binom{[n]}{n-i}$ let
$b_I=\wedge_{k\in I}b_k\in\Lambda_i(V)$ and $a_J=\wedge_{j\in J}a_j\in\Lambda_{n-i}(V)$, respectively.
Consider the square matrix $M$ with row index set $\binom{[n]}{i}$ and
column index set $\binom{[n]}{n-i}$, whose $(I,J)$-entry is $b_I\wedge
a_J$. First we note that 
\begin{equation}
 \det M\ne 0.
\end{equation}
Assume the contrary and suppose that some non-trivial linear
combination of the rows of $M$ is zero, say
\[\sum_{I\in\binom{[n]}{i}} c_I\,\left(b_I\wedge a_J\right)=\bigg(\sum_{I\in\binom{[n]}{i}} c_I\,b_I\bigg)\wedge a_J=0,\]
for all $J\in\binom{[n]}{n-i}$, with scalars $c_I$, not all zero.
Expanding the nonzero vector $\sum_{I\in\binom{[n]}{i}} c_I\,b_I\in\Lambda_i(V)$ in terms of the basis \{$a_I : I\in\binom{[n]}{i}\}$ of $\Lambda_i(V)$ yields
\[\bigg(\sum_{I\in\binom{[n]}{i}} d_I\,a_I\bigg)\wedge a_J=\sum_{I\in\binom{[n]}{i}} d_I\,\left(a_I\wedge a_J\right)=0,\]
for all $J\in\binom{[n]}{n-i}$, with scalars  $d_I$, not all zero.
But in view of $a_I\wedge a_J\not=0$ if and only if $I=[n]\setminus J$ we conclude that $d_I=0$, for all $I\in\binom{[n]}{i}$, a contradiction.

So $\det M\ne 0$, and by  Leibniz' formula there exists 
a bijection $\phi:\binom{[n]}{i}\rightarrow\binom{[n]}{n-i}$
with $b_I\wedge a_{\phi(I)}\neq 0$, for $I\in\binom{[n]}{i}$.
This is equivalent to $\{b_k:k\in I\}\cup\{a_j: j\in\phi(I)\}$ being a
basis of $V$, for $I\in\binom{[n]}{i}$
(cf.~\cite[Thm.~XVI.13]{BirkhoffMacLane1979}), which we wanted to show.
\end{proof}

\begin{lemma}\label{lem:ineq_lambdas} 
Let $K\in\K_0^n$, and let $a_1,\dots,a_n\in\Z^n$ be
  linearly independent such that $a_j\in\lambda_j(K)\,K$, $1\leq
  j\leq n$. Let $\overline{L}$ be
  an $i$-dimensional linear subspace, $i\in\{1,\dots,n-1\}$, containing
  $i$ linearly independent points of $\Z^n$, and assume that 
$\lin\{a_{j_1},\dots,a_{j_{n-i}}\}\cap\overline{L}=\{0\}$. Then 
\begin{equation*}
 \prod_{j=1}^i \lambda_j(K\cap\overline{L},\Z^n\cap\overline{L})
\geq \prod_{k\notin\{j_1,\dots,j_{n-i}\}}\lambda_{k}(K).
\end{equation*}
\end{lemma}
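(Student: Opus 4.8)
We have $K \in \mathcal{K}_0^n$, and $a_1, \ldots, a_n$ are linearly independent lattice points realizing the successive minima: $a_j \in \lambda_j(K) K$. We pick out $n-i$ indices $j_1, \ldots, j_{n-i}$ and require that $\text{lin}\{a_{j_1}, \ldots, a_{j_{n-i}}\}$ meets an $i$-dimensional subspace $\overline{L}$ (containing $i$ independent lattice points) only at the origin. We want:
$$\prod_{j=1}^i \lambda_j(K \cap \overline{L}, \mathbb{Z}^n \cap \overline{L}) \geq \prod_{k \notin \{j_1, \ldots, j_{n-i}\}} \lambda_k(K).$$

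Let me denote $\{k_1 < k_2 < \cdots < k_i\} = [n] \setminus \{j_1, \ldots, j_{n-i}\}$, the "kept" indices. So the RHS is $\prod_{m=1}^i \lambda_{k_m}(K)$.

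**The key relationship.** The successive minima on the left, $\mu_j := \lambda_j(K \cap \overline{L}, \mathbb{Z}^n \cap \overline{L})$, are the successive minima of the restricted body $K \cap \overline{L}$ with respect to the restricted lattice $\mathbb{Z}^n \cap \overline{L}$, inside the $i$-dimensional space $\overline{L}$.

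Let me think about what we'd need. The intuition: $K \cap \overline{L}$ is a "slice" of $K$, and its successive minima with respect to the slice lattice should be controlled by the successive minima of $K$ itself.

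**General principle.** There's a standard fact: if $\overline{L}$ is an $i$-dimensional rational subspace, then restricting successive minima should give something comparable. The condition that $\text{lin}\{a_{j_1}, \ldots, a_{j_{n-i}}\} \cap \overline{L} = \{0\}$ is crucial — it lets us "project out" the complementary directions.

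The plan is to realize the successive minima of the slice $K\cap\overline{L}$ by explicit lattice points and then splice them together with the vectors $a_{j_1},\dots,a_{j_{n-i}}$ into a full system of $n$ linearly independent integer vectors, whose associated radii with respect to $K$ turn out to dominate the numbers $\lambda_k(K)$.

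First I would set $W=\lin\{a_{j_1},\dots,a_{j_{n-i}}\}$. By hypothesis $W\cap\overline{L}=\{0\}$, and since $\dim W=n-i$ and $\dim\overline{L}=i$ this forces the direct sum decomposition $\R^n=W\oplus\overline{L}$. Next I would choose linearly independent lattice points $c_1,\dots,c_i\in\Z^n\cap\overline{L}$ realizing the successive minima of the slice, i.e.\ $c_m\in\mu_m\,(K\cap\overline{L})$ where $\mu_m=\lambda_m(K\cap\overline{L},\Z^n\cap\overline{L})$. Since $K\cap\overline{L}\subseteq K$ we get $c_m\in\mu_m K$, and of course $c_m\in\Z^n$. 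The vectors $c_1,\dots,c_i$ span $\overline{L}$, so combining them with the basis $a_{j_1},\dots,a_{j_{n-i}}$ of $W$ and invoking $\overline{L}\cap W=\{0\}$ yields $n$ linearly independent lattice points $c_1,\dots,c_i,a_{j_1},\dots,a_{j_{n-i}}\in\Z^n$.

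The engine of the argument is the elementary observation that whenever $g_1,\dots,g_n\in\Z^n$ are linearly independent with $g_t\in\rho_t K$ for positive reals $\rho_t$, the non-decreasing rearrangement $\rho_{(1)}\leq\dots\leq\rho_{(n)}$ satisfies $\lambda_t(K)\leq\rho_{(t)}$ for every $t$: indeed the $t$ vectors carrying the $t$ smallest radii are independent and all lie in $\rho_{(t)}K$, whence $\dim(\rho_{(t)}K\cap\Z^n)\geq t$. Applying this to the system just constructed, whose radii form the multiset $\{\mu_1,\dots,\mu_i\}\cup\{\lambda_{j_1}(K),\dots,\lambda_{j_{n-i}}(K)\}$, and taking the product over all $t$ gives
\[
\prod_{m=1}^i\mu_m\cdot\prod_{l=1}^{n-i}\lambda_{j_l}(K)\;=\;\prod_{t=1}^n\rho_{(t)}\;\geq\;\prod_{t=1}^n\lambda_t(K).
\]
Dividing by the positive factor $\prod_{l}\lambda_{j_l}(K)$ leaves precisely $\prod_{m=1}^i\mu_m\geq\prod_{k\notin\{j_1,\dots,j_{n-i}\}}\lambda_k(K)$, which is the assertion.

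I expect the only genuinely delicate points to be the two linear-algebra facts that drive the reduction: that the slice minimizers $c_m$ together with the $a_{j_l}$ stay linearly independent (this is exactly where the hypothesis $\lin\{a_{j_1},\dots,a_{j_{n-i}}\}\cap\overline{L}=\{0\}$ enters), and the domination of the sorted radii stated above. Everything else is bookkeeping; in particular no appeal to Minkowski's theorem or to volumes is required, only the definition of the successive minima and the monotonicity $\rho K\subseteq\rho'K$ for $\rho\leq\rho'$.
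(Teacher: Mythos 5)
Your proof is correct, and it takes a genuinely different route from the paper's. The paper proves the stronger, termwise statement $\lambda_l(K\cap\overline{L},\Z^n\cap\overline{L})\geq\lambda_{k_l}(K)$ for each $l=1,\dots,i$ (where $k_1<\dots<k_i$ are the kept indices), by contradiction: assuming $\overline{\lambda}_l<\lambda_{k_l}$ for some $l$, it takes the smallest $m$ with $\lambda_m=\lambda_{k_l}$ and exhibits the $l$ slice minimizers together with those $a_j$, $j\in\{j_1,\dots,j_{n-i}\}$, $j\leq m-1$, as at least $m$ linearly independent lattice points in $\inter(\lambda_m K)$, contradicting the definition of $\lambda_m$; the counting step (``at most $l-1$ kept indices below $m$'') is the delicate part. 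You instead splice \emph{all} $i$ slice minimizers with \emph{all} of $a_{j_1},\dots,a_{j_{n-i}}$ at once into a full independent system, invoke the elementary fact that the sorted radii of any $n$ independent lattice vectors dominate the successive minima, and divide out $\prod_l\lambda_{j_l}(K)$. Both arguments hinge on exactly the same two ingredients — the direct sum $\R^n=\lin\{a_{j_1},\dots,a_{j_{n-i}}\}\oplus\overline{L}$ guaranteeing independence of the spliced system, and monotonicity of dilates — but yours is shorter and avoids the index bookkeeping, at the price of yielding only the product inequality rather than the termwise domination. Since only the product is used in the proof of Theorem 2.2, your weaker conclusion suffices for the paper's purposes.
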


\begin{proof} For abbreviation we set $\overline{\Lambda}=\Z^n\cap\overline{L}$, $\overline{K}=K\cap
  \overline{L}$,
  $\overline{\lambda}_j=\lambda_j(K\cap\overline{L},\Z^n\cap\overline{L})$,
  $1\leq j\leq i$, 
  and $\lambda_j=\lambda_j(K)$, $1\leq j\leq n$. Moreover, let
  $\overline{w}_1,\dots,\overline{w}_i\in\overline{\Lambda}$ be
  linearly independent such that $\overline{w}_j\in\overline{\lambda}_j\,\overline{K}$.
  Let $j_1<j_2<\dots <j_{n-i}$ and let 
  $k_1<k_2<\dots<k_{i}$ be the indices in
  $[n]\setminus\{j_1,\dots,j_{n-i}\}$. Suppose there exists
  an index $l\in\{1,\dots,i\}$ with  
\begin{equation}
           \overline{\lambda}_l< \lambda_{k_l},
\label{eq:equation1}
\end{equation}
 and let $m$ be the smallest index such that
$\lambda_m=\lambda_{k_l}$.  Since $\overline{K}\subset K$, 
$\overline{\Lambda}\subset\Z^n$, we get by \eqref{eq:equation1}, the choice of $m$ and the
definition of the successive minima that   
\begin{equation*}
     \{\overline{w}_1,\dots,\overline{w}_l\}\cup\{a_j : 1\leq j\leq
     m-1, j\in\{j_1,\dots,j_{n-i}\}\}\subseteq\inter(\lambda_m\, K)\cap\Z^n.
\end{equation*}  
Since there are at most $l-1$ indices in the set $\{1,\dots,m-1\}$
belonging to $\{k_1,\dots,k_i\}$, we conclude that 
$\#\{j : j\in\{j_1,\dots,j_{n-i}\}\text{ and }1\leq j\leq m-1\}\geq m-l$.
 Hence, on the left
hand side of the inclusion above  we have at least $m$ lattice vectors 
which by the assumption $\lin\{a_{j_1},\dots,a_{j_{n-i}}\}\cap\overline{L}=\{0\}$ are  linearly
independent. This, however,  contradicts the definition of
$\lambda_m$, and so we have
 shown $\overline{\lambda}_l\geq \lambda_{k_l}$, $l=1,\dots,i$, which implies the
assertion.
\end{proof}

\begin{proof}[Proof of Theorem \ref{thm_parall}] 
Let $Z$ be the parallelepiped generated by $v_1,\dots,v_n\in\Z^n$. Abbreviate $\lambda_j(DZ)$ by $\lambda_j$ and for
$J\subseteq[n]$ with $\#J=i$, let $DP_J=P_J-P_J=\left\{\sum_{j\in J}
  \mu_j\,v_j : -1\leq \mu_j\leq 1\right\}$ and write
$\Lambda_J=\lin\{v_{j}:j\in J\}\cap\Z^n$. In view of Proposition
\ref{prop_g_i} and the fact that
$\vol_i(P_J)=\frac1{2^i}\vol_i(DP_J)$ we have to show 
\begin{equation*}
 \lE_i(Z)=\frac1{2^i}\sum_{J\subseteq[n],\,\#J=i} \frac{\vol_i(DP_J)}{\det\Lambda_J} 
 \leq 
 \sum_{I\subseteq[n],\,\#I=i} \frac1{\prod_{k\in I} \lambda_k}.
\end{equation*}
By the second theorem of Minkowski 
(Theorem \ref{thm_mink}) we can estimate each summand on the left and get
\begin{equation*}
 \lE_i(Z)=\frac1{2^i}\sum_{J\subseteq[n],\,\#J=i} \frac{\vol_i(DP_J)}{\det\Lambda_J} 
 \leq \sum_{J\subseteq[n],\,\#J=i} \frac1{\prod_{j=1}^i \lambda_j(DP_J,\Lambda_J)}.
\end{equation*}
Hence it suffices to show
\begin{equation}
\sum_{J\subseteq[n],\,\#J=i} \frac1{\prod_{j=1}^i
  \lambda_j(DP_J,\Lambda_J)}
\leq 
 \sum_{I\subseteq[n],\,\#I=i} \frac1{\prod_{k\in I} \lambda_k}.
\label{eq:first_step}
\end{equation}
Now, let $a_1,\ldots,a_n\in\Z^n$  be linearly independent with $a_j\in\lambda_j\,DZ$, $1\leq j\leq n$. Furthermore
$v_1,\ldots,v_n\in\Z^n$ are linearly independent as well. Thus by Lemma \ref{lem:bijection} there is a bijection
$\phi:\binom{[n]}{i}\rightarrow\binom{[n]}{n-i}$ such that for all $J\in\binom{[n]}{i}$
\[\lin\{v_j:j\in J\}\cap\lin\{a_k: k\in\phi(J)\}=\{0\}.\]
Thus together with Lemma \ref{lem:ineq_lambdas} we get
\begin{equation*}
 \prod_{j=1}^i \lambda_j(DZ\cap\lin\{v_l:l\in J\},\Z^n\cap\lin\{v_l:l\in J\})
\geq \prod_{k\notin\phi(J)}\lambda_{k},
\end{equation*}
and on account of $\lambda_j(DP_J,\Lambda_J)\geq
\lambda_j(DZ\cap\lin\{v_l:l\in J\},\Z^n\cap\lin\{v_l:l\in J\})$ we
obtain   
\begin{equation}
 \frac1{\prod_{j=1}^i \lambda_j(DP_J,\Lambda_J)}
\leq \frac1{\prod_{k\notin\phi(J)}\lambda_{k}}.
\label{eq:bijecti}
\end{equation} 
Since $\phi$ is a bijection we get \eqref{eq:first_step}.
\end{proof}

For the proof of Theorem \ref{thm_gi_bound} we need the following counterpart to 
Min\-kowski's Theorem \ref{thm_mink} (e.g.~see \cite[Thm.~1.2]{HenkWills2008})
\begin{equation}\label{eq:mink_lowbound}
\frac{2^n}{n!}\det(\Lambda)\leq\lambda_1(K,\Lambda)\cdot\ldots\cdot\lambda_n(K,\Lambda)\vol(K),
\end{equation}
where $K\in\K^n_0$ and $\Lambda\in\L^n$. 

\begin{proof}[Proof of Theorem \ref{thm_gi_bound}] Let $Z$ be 
  generated by $v_1,\dots,v_m\in\Z^n$ and let $\dim Z=n$.
For short we write $\lambda_i$ instead of $\lambda_i(DZ)$ and for $I\subseteq[m]$, 
$\#I=i$, let $P_I=\left\{\sum_{j\in I} \mu_j\,v_j : 0\leq \mu_j\leq 1\right\}$,
$L_I=\lin\{v_j:j\in I\}$ and $L_I^\bot$ be its orthogonal
complement. The orthogonal projection of a set $S\subseteq \R^n$ onto a linear
subspace $L$ is denoted by $S|L$.

For $J\subseteq[m]$, $\#J=n$, and $i\in[n]$, let $I\subseteq J$ with $\#I=i$. Then
\begin{equation*}
\vol(P_J)=\vol_i(P_I)\,\vol_{n-i}(P_J|L_I^\bot),
\end{equation*}
which, e.g.,  can easily be seen by Gram-Schmidt
orthogonalization.
Hence, by Proposition \ref{prop_g_i} or \eqref{eq:volumezonotope} we
can write
\[\begin{aligned}
\vol(Z)&=\sum_{J\subseteq[m],\,\#J=n}\vol(P_J)\\
&=\sum_{J\subseteq[m],\,\#J=n}\frac1{\binom{n}{i}}\sum_{I\subseteq J,\,\#I=i}\vol_i(P_I)\,\vol_{n-i}(P_J|L_I^\bot)\\
&=\frac1{\binom{n}{i}}\sum_{I\subseteq[m],\,\#I=i}\vol_i(P_I)\sum_{I\subseteq J\subseteq[m],\,\#J=n}\vol_{n-i}(P_J|L_I^\bot).
\end{aligned}\]
Furthermore, for $I\subseteq[m]$ with $\#I=i$, we have
\[\sum_{I\subseteq J\subseteq[m],\,\#J=n}\vol_{n-i}(P_J|L_I^\bot)=\vol_{n-i}(Z|L_I^\bot),\]
because the sum on the left hand side covers all volumes of
$(n-i)$-dimensional parallelepipeds that are spanned by generators of
$Z|L_I^\bot$ (cf.~\eqref{eq:volumezonotope}). This implies
\[\begin{aligned}
\vol(Z)&=\frac1{\binom{n}{i}}\sum_{I\subseteq[m],\,\#I=i}\vol_i(P_I)\vol_{n-i}(Z|L_I^\bot)\\
&=\frac1{\binom{n}{i}}\sum_{I\subseteq[m],\,\#I=i}\frac{\vol_i(P_I)}{\det(\Z^n\cap L_I)}\,\frac{\vol_{n-i}(Z|L_I^\bot)}{\det(\Z^n|L_I^\bot)},
\end{aligned}\]
where for the last step we refer to \cite[Corollary 1.3.5]{Martinet2003}.
Together with the identity $\vol_{n-i}(Z|L_I^\bot)=\frac1{2^{n-i}}\vol_{n-i}(DZ|L_I^\bot)$ and \eqref{eq:mink_lowbound} we get
\[\vol(Z)\geq\frac1{\binom{n}{i}}\sum_{I\subseteq[m],\,\#I=i}\frac{\vol_i(P_I)}{\det(\Z^n\cap L_I)}
\left(\frac1{(n-i)!}\prod_{j=1}^{n-i}\frac1{\lambda_j(DZ|L_I^\bot,\Z^n|L_I^\bot)}\right).\]
Since $\lambda_j(DZ|L_I^\bot,\Z^n|L_I^\bot)\leq\lambda_{i+j}(DZ)$, for $j=1,\ldots,n-i$, we obtain
\begin{equation*}
\vol(Z)\geq\frac{i!}{n!}\sum_{I\subseteq[m],\,\#I=i}\frac{\vol_i(P_I)}{\det(\Z^n\cap L_I)}\prod_{j=i+1}^{n}\frac1{\lambda_j}.
\end{equation*}
With Proposition \ref{prop_g_i} we finally obtain
\begin{equation}
\vol(Z)\geq\frac{i!}{n!}\lE_i(Z)\prod_{j=i+1}^{n}\frac1{\lambda_j},
\end{equation}
as desired. The second part of the theorem can now be derived with the help of $\vol(DZ)=2^n\vol(Z)$ and Theorem \ref{thm_mink}.
\end{proof}

We remark that Henk, Linke and Wills \cite[Cor.~1.1]{HenkLinkeWills2010} improved 
the bound \eqref{eq:mink_lowbound} for the class of zonotopes by, roughly speaking, 
a factor of order $(\sqrt{n})^{n+1}$, which leads to the better inequalities
\begin{equation*}
\lE_i(Z)\leq\binom{n}{i}(n-i)^{\frac{n-i}2}\sigma_i\left(Z\right),\,\textrm{for }1\leq i\leq n.
\end{equation*}

The remaining part of this section will be devoted to some partial results 
concerning the coefficient-wise approach to Conjecture \ref{conj_main} in the
case when one imposes additional assumptions on the generators of a
lattice zonotope.\par
The first one is an extension of Theorem \ref{thm_parall} and 
depending on the number of generators it improves upon Theorem
\ref{thm_gi_bound}.

\begin{theorem} Let $\{v_1,\dots,v_m\}\subset\Z^n$ be in general
  position, i.e., every $n$ of them are linearly independent, and let $Z\in\P^n$ 
  be the zonotope generated by these vectors. 
  Then, for $1\leq i\leq n$,
  \begin{equation*}
   \lE_i(Z)\leq \frac{\binom{m}{i}}{\binom{n}{i}}\sigma_i\left(Z\right).
  \end{equation*}
\end{theorem}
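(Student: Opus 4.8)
The plan is to reduce to the parallelepiped case (Theorem \ref{thm_parall}) by averaging over the full-dimensional parallelepipeds spanned by the $n$-element subsets of the generators, and then to close the argument with a double-counting identity. First I would fix, for each $n$-element subset $K\subseteq[m]$, the sub-zonotope $Z_K=P_K=\sum_{k\in K}[0,v_k]$. Because the generators are in general position, the $n$ vectors $\{v_k:k\in K\}$ are linearly independent, so $Z_K$ is a full-dimensional lattice parallelepiped and Theorem \ref{thm_parall} applies, giving $\lE_i(Z_K)\le\sigma_i(Z_K)$ for every $1\le i\le n$. This is precisely the step where the general-position hypothesis is used, since it is what guarantees that each $Z_K$ is $n$-dimensional.

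Next I would pass from $\sigma_i(Z_K)$ to $\sigma_i(Z)$. As $Z_K$ is generated by a subset of the generators of $Z$, we have $Z_K\subseteq Z$ and hence $DZ_K\subseteq DZ$. Monotonicity of the successive minima then yields $\lambda_j(DZ_K)\ge\lambda_j(DZ)$, so $1/\lambda_j(DZ_K)\le1/\lambda_j(DZ)$ for all $j$. Since $\sigma_i$ is nondecreasing in each of its nonnegative arguments, this gives $\sigma_i(Z_K)\le\sigma_i(Z)$, and therefore $\lE_i(Z_K)\le\sigma_i(Z)$ for every $n$-subset $K$.

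The final step is the counting argument. By Proposition \ref{prop_g_i} applied to each $Z_K$,
\[
\lE_i(Z_K)=\sum_{J\subseteq K,\,\#J=i}\frac{\vol_i(P_J)}{\det(\lin P_J\cap\Z^n)},
\]
where each summand depends only on $J$ and not on the ambient $K$. Summing over all $\binom{m}{n}$ subsets $K$ and interchanging the order of summation, each fixed $i$-subset $J$ lies in exactly $\binom{m-i}{n-i}$ of the $K$'s, so by Proposition \ref{prop_g_i} for $Z$ itself,
\[
\binom{m-i}{n-i}\,\lE_i(Z)=\sum_{K\subseteq[m],\,\#K=n}\lE_i(Z_K)\le\binom{m}{n}\,\sigma_i(Z).
\]
Dividing by $\binom{m-i}{n-i}$ and simplifying the binomial ratio through the elementary identity $\binom{m}{n}\binom{n}{i}=\binom{m}{i}\binom{m-i}{n-i}$ yields the assertion.

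I expect the genuinely delicate points to be bookkeeping rather than conceptual. One must verify that the lattice $\lin P_J\cap\Z^n$, and hence each term of Proposition \ref{prop_g_i}, is unchanged whether $P_J$ is viewed inside $Z_K$ or inside $Z$, so that the double count is exact; this is what makes the interchange of summation legitimate. Beyond that, the only remaining checks are the monotonicity of $\sigma_i$ and the binomial identity, both of which are routine, so I do not anticipate a serious obstacle once the averaging idea is in place.
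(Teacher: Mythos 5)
Your proof is correct and follows essentially the same route as the paper: both average over the $\binom{m}{n}$ full-dimensional sub-parallelepipeds guaranteed by general position and exploit the double count that each $i$-subset of generators lies in exactly $\binom{m-i}{n-i}$ of them. The only (harmless) difference is that you invoke Theorem~\ref{thm_parall} as a black box and then pass from $\sigma_i(Z_K)$ to $\sigma_i(Z)$ via monotonicity of the successive minima, whereas the paper reruns the bijection argument of that proof directly against the minima of the full difference body $DZ$.
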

\begin{proof}
We follow the outline of the proof of Theorem \ref{thm_parall} and
also use its notation.  Based on Proposition \ref{prop_g_i} and
Minkowski's second theorem (Theorem \ref{thm_mink}) 
here it suffices to show (cf.~\eqref{eq:first_step}) 
\begin{equation}
\sum_{J\subseteq[m],\,\#J=i} \frac1{\prod_{j=1}^i
  \lambda_j(DP_J,\Lambda_J)}
\leq
    \frac{\binom{m}{i}}{\binom{n}{i}} \sum_{I\subseteq[n],\,\#I=i} \frac1{\prod_{k\in I} \lambda_k}.
\label{eq:first_step_gp}
\end{equation}
Now, since every set $J\subseteq[m]$ with $\#J=i$ is contained in 
$\binom{m-i}{n-i}$ sets $I\subseteq[m]$ of size 
$\#I=n$, we can replace the left hand side by 
\begin{equation*}
  \frac1{\binom{m-i}{n-i}}\sum_{I\subseteq[m],\,\#I=n}
 \sum_{J\subseteq I,\,\#J=i} \frac1{\prod_{j=1}^i \lambda_j(DP_J,\Lambda_J)}.
\end{equation*}
and \eqref{eq:first_step_gp} becomes 
\begin{equation}
 \sum_{I\subseteq[m],\,\#I=n}
 \sum_{J\subseteq I,\,\#J=i} \frac1{\prod_{j=1}^i
   \lambda_j(DP_J,\Lambda_J)}\leq 
\binom{m}{n} \sum_{I\subseteq[n],\,\#I=i} \frac1{\prod_{k\in I} \lambda_k}.
 \label{eq:second_step_gp}
\end{equation}
Now let
$a_1,\ldots,a_n\in\Z^n$  be linearly independent with
$a_j\in\lambda_j\,DZ$, $1\leq j\leq n$. By our assumption, 
any choice of $n$ generators $v_{i_1},\ldots,v_{i_n}\in\Z^n$ is linearly
independent and so we may apply Lemma  \ref{lem:bijection} to
any $n$-subset $I=\{i_1,\dots,i_n\}\subseteq[m]$. Hence, as in the
proof of Theorem \ref{thm_parall} we find that  there is a bijection
$\phi:\binom{I}{i}\rightarrow\binom{[n]}{n-i}$ such that for all
$J\in\binom{I}{i}$ (cf.~\eqref{eq:bijecti})
\begin{equation*}
 \frac1{\prod_{j=1}^i \lambda_j(DP_J,\Lambda_J)}
\leq \frac1{\prod_{k\notin\phi(J)}\lambda_{k}}.
\end{equation*} 
Since $\phi$ is a bijection we get 
\begin{equation*}
 \sum_{J\subseteq I,\,\#J=i} \frac1{\prod_{j=1}^i \lambda_j(DP_J,\Lambda_J)}
 \leq \sum_{T\subseteq[n],\,\#T=i}\frac1{\prod_{t\in T}\lambda_{t}},
\end{equation*}
which implies \eqref{eq:second_step_gp}.
\end{proof}

As an immediate consequence of Theorem \ref{thm_mink} one can prove (\ref{ineq_coeff_wise}) for $i=1$ and lattice zonotopes with primitive generators in general position. Here a non-trivial lattice vector $z\in\Z^n$ is said to be {\em primitive}, if the greatest common divisor of its entries equals one.

\begin{corollary}\label{cor_primgenpos}
Let $\{v_1,\dots,v_m\}\subset\Z^n$ be primitive vectors  in general
  position, and let $Z\in\P^n$ be the zonotope generated by these vectors.
Then
\begin{equation*}
 \lE_1(Z) = m \leq \sum_{i=1}^n \frac1{\lambda_i(DZ)}=\sigma_1\left(Z\right).
\end{equation*}
\end{corollary}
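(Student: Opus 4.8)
The plan is to establish the two assertions of the corollary separately: the equality $\lE_1(Z)=m$ and the inequality $m\le\sigma_1(Z)$.

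For the equality I would apply Stanley's formula \eqref{eq:stanley_zonotope} (equivalently Proposition \ref{prop_g_i}) in the case $i=1$. General position forces every $v_j$ to be nonzero, so each singleton $\{v_j\}$ is linearly independent and $\set_1(Z)$ has exactly $m$ elements. The $1$-minors of a single column $v_j$ are precisely its coordinates, whose greatest common divisor equals $1$ because $v_j$ is primitive. Hence every summand in \eqref{eq:stanley_zonotope} equals $1$ and $\lE_1(Z)=m$. Via Proposition \ref{prop_g_i} the same conclusion follows, since $\det(\lin P_{\{j\}}\cap\Z^n)=\vol_1([0,v_j])$ holds precisely when $v_j$ is primitive, so again each term equals $1$.

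For the inequality the decisive geometric input is that general position yields a clean volume lower bound. By the zonotope volume formula \eqref{eq:volumezonotope}, $\vol(Z)=\sum_{j_1<\dots<j_n}|\det(v_{j_1},\dots,v_{j_n})|$, and since every $n$-subset of the generators is linearly independent, each of the $\binom{m}{n}$ determinants is a nonzero integer and thus at least $1$ in absolute value; therefore $\vol(Z)\ge\binom{m}{n}$. Applying Minkowski's second theorem (Theorem \ref{thm_mink}) to $DZ\in\K^n_0$ and using $\vol(DZ)=2^n\vol(Z)$, I would deduce
\[\prod_{i=1}^n\lambda_i(DZ)\le\frac{2^n}{\vol(DZ)}=\frac{1}{\vol(Z)}\le\binom{m}{n}^{-1}.\]
The arithmetic--geometric mean inequality applied to the positive numbers $1/\lambda_i(DZ)$ then gives $\sigma_1(Z)=\sum_{i=1}^n 1/\lambda_i(DZ)\ge n\big(\prod_i\lambda_i(DZ)\big)^{-1/n}\ge n\binom{m}{n}^{1/n}$, and the elementary estimate $\binom{m}{n}\ge(m/n)^n$ upgrades this to $\sigma_1(Z)\ge m=\lE_1(Z)$, as desired.

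I expect no genuine obstacle, the argument being a direct application of Theorem \ref{thm_mink}. The two points deserving care are conceptual rather than technical: general position is exactly the hypothesis that lets one bound $\vol(Z)$ from below by $\binom{m}{n}$, while primitivity is exactly what yields $\lE_1(Z)=m$. The only auxiliary fact is the combinatorial inequality $\binom{m}{n}\ge(m/n)^n$, which links the AM--GM bound back to $m$ and follows termwise from $\tfrac{m-j}{n-j}\ge\tfrac{m}{n}$ for $0\le j\le n-1$ in the product $\binom{m}{n}=\prod_{j=0}^{n-1}\tfrac{m-j}{n-j}$.
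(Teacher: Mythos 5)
Your proposal is correct and follows essentially the same route as the paper: Stanley's formula plus primitivity for $\lE_1(Z)=m$, the volume lower bound $\vol(Z)\geq\binom{m}{n}$ from general position, Minkowski's second theorem, and the AM--GM inequality. The only addition is that you spell out the elementary estimate $\binom{m}{n}\geq(m/n)^n$, which the paper uses implicitly in the final step $n\binom{m}{n}^{1/n}\geq m$.
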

\begin{proof} First, by \eqref{eq:stanley_zonotope} it holds $\lE_1(Z)=\sum_{i=1}^m\gcd(v_i)$, which equals $m$ because the $v_i$ are chosen to be primitive. Moreover, the generators are in general position and any parallelepiped with integer vertices has volume at least one, which yields -- using also Proposition \ref{prop_g_i} -- that $\vol(Z)=\lE_n(Z)\geq \binom{m}{n}$ and together with $\vol(Z)=\frac1{2^n}\vol(DZ)$ and Theorem \ref{thm_mink} we conclude that
\begin{eqnarray*}
  2^n &\geq&\lambda_1(DZ)\cdot\ldots\cdot\lambda_n(DZ)\,\vol(DZ)\\
      &=&2^n\lambda_1(DZ)\cdot\ldots\cdot\lambda_n(DZ)\,\vol(Z)\geq 2^n \lambda_1(DZ)\cdot\ldots\cdot\lambda_n(DZ)\,\binom{m}{n}.
\end{eqnarray*}
Thus,
\begin{equation*}
 \frac{1}{\lambda_1(DZ)}\cdot\ldots\cdot\frac{1}{\lambda_n(DZ)} \geq \binom{m}{n}
\end{equation*}
and the inequality of the arithmetic and geometric mean finally yields
\begin{equation*}
 \frac{1}{\lambda_1(DZ)}+\cdots + \frac{1}{\lambda_n(DZ)} \geq n\binom{m}{n}^{1/n}\geq m.
\end{equation*}
\end{proof}

In the context of $\lE_1(Z)$ it might be also of interest to have a look at
the so called {\em Davenport constant} $s(G)$ of a finite Abelian group $G$: it is the minimal $d$ such that every sequence of $d$ elements of $G$ contains a nonempty subsequence with zero-sum. For a survey on this and related zero-sum problems see \cite{GaoGeroldinger2006} and the references therein.  It is conjectured that
\begin{equation*}
    s(\Z^n_k)=n(k-1)+1,
\end{equation*}
where $\Z^n_k$ is the $n$-fold product of the cyclic group $\Z_k$ of
order $k$.
The conjecture is known to be true if  $k$ is a prime power
(cf. \cite{Olsen1969}), and so  we get, for instance,

\begin{proposition} Let $k\in\N$ be a prime power, and let $m\in\N$
  such that $n(k-1)+1\leq m\leq kn$. Let $Z\in\P^n$ be a zonotope
  generated by $m$ primitive lattice vectors. Then
\begin{equation*}
 \lE_1(Z)\leq  n\,\frac1{\lambda_1(DZ)}.
\end{equation*}
\end{proposition}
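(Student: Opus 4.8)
The plan is to turn the desired inequality into a statement about $\lambda_1(DZ)$ and then to produce the required short lattice vector from the Davenport constant. Since the $v_i$ are primitive, formula \eqref{eq:stanley_zonotope} gives $\lE_1(Z)=\sum_{i=1}^m\gcd(v_i)=m$, so the assertion $\lE_1(Z)\le n/\lambda_1(DZ)$ is equivalent to $\lambda_1(DZ)\le n/m$. Recalling that $DZ=\sum_{i=1}^m[-v_i,v_i]=\{\sum_{i=1}^m\beta_iv_i:|\beta_i|\le1\}$, this in turn amounts to exhibiting a nonzero lattice point in $\frac{n}{m}DZ$.

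Before invoking the Davenport constant I would first normalize the generators. Replacing some $v_i$ by $-v_i$ leaves $DZ$, the number $m$, the primitivity of the generators and hence $\lE_1(Z)$ unchanged, because $[-v_i,v_i]=[-(-v_i),-v_i]$. Choosing $c\in\R^n$ with $\langle c,v_i\rangle\ne0$ for all $i$ (possible since each $v_i\ne0$ and finitely many hyperplanes cannot cover $\R^n$) and flipping the sign of every $v_i$ with $\langle c,v_i\rangle<0$, I may assume $\langle c,v_i\rangle>0$ for all $i$. Then $\langle c,\sum_{j\in J}v_j\rangle>0$, and in particular $\sum_{j\in J}v_j\ne0$, for every nonempty $J\subseteq[m]$.

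Now reduce the generators modulo $k$ to obtain $\bar v_1,\dots,\bar v_m\in\Z^n_k$. As $k$ is a prime power, $s(\Z^n_k)=n(k-1)+1\le m$, so this sequence has a nonempty zero-sum subsequence, i.e.\ there is a nonempty $J\subseteq[m]$ with $\sum_{j\in J}v_j\equiv0\pmod k$. Put $w=\frac1k\sum_{j\in J}v_j\in\Z^n$. The normalization gives $w\ne0$, while writing $w=\sum_{i=1}^m\beta_iv_i$ with $\beta_i=1/k$ for $i\in J$ and $\beta_i=0$ otherwise shows $|\beta_i|\le1/k\le n/m$, where the hypothesis $m\le kn$ is used. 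Hence $w$ is a nonzero lattice point of $\frac{n}{m}DZ$, so $\lambda_1(DZ)\le n/m$ and therefore $\lE_1(Z)=m\le n/\lambda_1(DZ)$.

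The one point that needs care, and which I expect to be the main obstacle, is that the zero-sum subsequence produced by the Davenport constant might have integer sum $0$, so that $w=0$ and nothing is gained; small examples (already for $n=1$, $k=3$) show that this can indeed occur if one does not normalize. The halfspace normalization of the second paragraph is precisely what rules this out, and the only thing to verify carefully there is that sign flips of the generators affect neither $DZ$ nor $\lE_1(Z)$.
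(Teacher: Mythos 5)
Your proof is correct and follows essentially the same route as the paper's: reduce to showing $\lambda_1(DZ)\leq n/m$, normalize the generators into an open half-space so that no nonempty subsum vanishes, and then use $s(\Z^n_k)=n(k-1)+1\leq m$ to produce a nonempty subset whose sum is divisible by $k$, giving $\lambda_1(DZ)\leq 1/k\leq n/m$. Your explicit attention to why the half-space normalization is needed (to rule out a zero integer subsum) is exactly the point the paper handles with its hyperplane $H$.
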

\begin{proof}
As in the proof of Corollary \ref{cor_primgenpos} we have $\lE_1(Z)=m$ and so we have to show that $\lambda_1(DZ)\leq\frac{n}{m}$.
Let $H=\{x\in\R^n : a^\intercal x=0\}$ be a hyperplane such that the half-space $\{x\in\R^n : a^\intercal x>0\}$ contains, without loss of generality, all the vectors $v_1,\dots,v_m$ (if not replace $v_i$ by $-v_i$, which does not change $DZ$). This implies, that any sum of the generators is non-zero. Since $s(\Z^n_k)=n(k-1)+1\leq m$, there exists a subset $v_{i_1},\dots,v_{i_l}$ of the generators whose sum is divisible by $k$ and so $\lambda_1(DZ)\leq \frac1{k}\leq\frac{n}{m}$ as desired.
\end{proof}

\section{Lattice-face polytopes}

In this section, we study Conjecture \ref{conj_main}\, on the class of
lattice-face polytopes which were already defined in the introduction
(see Definition \ref{def_latface}). First of all, we state some
properties of these polytopes being relevant for our further
discussion. Recall that $\pi^{(n-i)}$ denotes the projection that
forgets the last $n-i$ coordinates, $i=1,\dots,n$. For sake of brevity we write $\pi=\pi^{(1)}$.

\begin{lemma}[cf. \cite{Liu2009}]\label{lem_prop_latfac}
Let $P\in\P^n$ be a lattice-face polytope. Then,
\begin{itemize}
 \item[\romannumeral1)] $\pi(P)\in\P^{n-1}$ is a lattice-face polytope.
 \item[\romannumeral2)] $mP$ is a lattice-face polytope, for any integer $m$.
 \item[\romannumeral3)] Let $H$ be an $(n-1)$-dimensional affine space
   spanned by some subset of $\mathrm{vert}(P)$. Then, for any lattice
   point $y\in\Z^{n-1}$, the preimage $\pi^{-1}(y)\cap H$ is also a
   lattice point.
 \item[\romannumeral4)] $P$ is a lattice polytope.
\end{itemize}
\end{lemma}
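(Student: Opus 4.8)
The plan is to extract two structural facts from the definition and then dispatch the four parts in the order (iv), (iii), (ii), (i), since the difficulty increases in roughly that order. First I would record the factorization of the coordinate-forgetting maps: for $0\le k\le n-1$ one has $\pi^{(n-k)}=\bar\pi^{(n-1-k)}\circ\pi$, where $\bar\pi^{(n-1-k)}\colon\R^{n-1}\to\R^k$ is the analogous projection on $\R^{n-1}$ forgetting the last $n-1-k$ coordinates (and more generally all such projections compose). Second, I would isolate the following consequence of the defining condition: if $A\subset\R^n$ is a $k$-dimensional affine space with $\pi^{(n-k)}(A\cap\Z^n)=\Z^k$ and $L$ denotes the linear direction space of $A$, then (a) $\pi^{(n-k)}$ restricts to a \emph{bijection} $A\to\R^k$, because its image is an affine subspace containing $\Z^k$ and hence all of $\R^k$ while $\dim A=k$; and (b) $A\cap\Z^n$ is a nonempty coset $a_0+(L\cap\Z^n)$ with $a_0\in\Z^n$, on which $\pi^{(n-k)}$ carries the rank-$k$ lattice $L\cap\Z^n$ isomorphically onto $\Z^k$.

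Part (iv) is then the case $k=0$: for a single vertex $v$ we have $\aff(\{v\})=\{v\}$, and the condition $\pi^{(n)}(\{v\}\cap\Z^n)=\Z^0$ forces $\{v\}\cap\Z^n\ne\emptyset$, i.e.\ $v\in\Z^n$; since this holds for every vertex, $P$ is a lattice polytope. For (iii) I would apply the definition with $k=n-1$ to a vertex set spanning $H$, obtaining $\pi(H\cap\Z^n)=\Z^{n-1}$; by fact (a) the restriction $\pi|_H\colon H\to\R^{n-1}$ is a bijection, so for each $y\in\Z^{n-1}$ the point $\pi^{-1}(y)\cap H$ is well-defined and unique, and since \emph{some} lattice point of $H$ maps to $y$ it must coincide with this point, which is therefore a lattice point.

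For (ii) with $m\ne0$ (the value $m=0$ being degenerate), I would use $\mathrm{vert}(mP)=m\,\mathrm{vert}(P)$, so that any $U\subset\mathrm{vert}(mP)$ spanning a $k$-dimensional affine space equals $mV$ with $V\subset\mathrm{vert}(P)$ and $\aff(U)=m\,\aff(V)=:mA$, which has the \emph{same} direction space $L$ as $A$ since $mL=L$. Invoking fact (b) for $A$, I pick $a_0\in A\cap\Z^n$; then $ma_0\in\Z^n$ lies in $mA$, hence $ma_0+(L\cap\Z^n)\subseteq mA\cap\Z^n$, and applying $\pi^{(n-k)}$ to this coset yields a coset of $\Z^k$ by the lattice vector $\pi^{(n-k)}(ma_0)$, namely all of $\Z^k$. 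Together with the trivial inclusion $\pi^{(n-k)}(mA\cap\Z^n)\subseteq\Z^k$ this gives the required equality.

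Part (i) is the crux, and I expect the main obstacle to lie here. Every vertex of $\pi(P)$ is the image of a vertex of $P$ (standard for projections of polytopes), so I would lift a vertex set $U'\subset\mathrm{vert}(\pi(P))$ spanning a $k$-dimensional $A'=\aff(U')$ to a set $U\subset\mathrm{vert}(P)$ with $\pi(U)=U'$. The delicate point is that $d:=\dim\aff(U)$ may equal $k$ or $k+1$, according to whether the direction space of $\aff(U)$ contains $e_n=\ker\pi$. Applying the defining condition of $P$ to $U$ gives $\pi^{(n-d)}(\aff(U)\cap\Z^n)=\Z^d$, and pushing this through the factorization $\pi^{(n-d)}=\bar\pi^{(n-1-d)}\circ\pi$ together with $\pi(\aff(U)\cap\Z^n)\subseteq A'\cap\Z^{n-1}$ yields $\bar\pi^{(n-1-d)}(A'\cap\Z^{n-1})=\Z^d$. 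If $d=k$ this is exactly the condition required for $A'$; if $d=k+1$, one further coordinate projection $\R^{k+1}\to\R^k$ collapses $\Z^{k+1}$ onto $\Z^k$ and again produces $\bar\pi^{(n-1-k)}(A'\cap\Z^{n-1})=\Z^k$. The bookkeeping around the two possible values of $d$, and checking that the lift can always be chosen, is where the care is needed; the factorization identity is precisely what makes the two cases line up.
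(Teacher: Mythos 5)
Your proof is correct. Note that the paper itself does not prove this lemma at all: it is stated with the citation ``cf.~\cite{Liu2009}'' and the proof is deferred entirely to Liu's paper, so there is no in-paper argument to compare against. Your self-contained treatment is sound and handles the genuinely delicate points properly: the observation that an affine subspace of $\R^k$ containing $\Z^k$ must be all of $\R^k$ (giving bijectivity of $\pi^{(n-k)}$ on $\aff(U)$, which is what makes iii) work); the coset description $A\cap\Z^n=a_0+(L\cap\Z^n)$ together with the invariance of the direction space under scaling (which gives ii) for every nonzero integer $m$, positive or negative); and, in i), the dichotomy $d\in\{k,k+1\}$ for the dimension of the lifted affine hull, resolved by composing with the extra coordinate-forgetting map $\R^{k+1}\to\R^k$, which carries $\Z^{k+1}$ onto $\Z^k$. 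The one assertion you flag but leave as ``standard'' --- that every vertex of $\pi(P)$ lifts to a vertex of $P$ --- is indeed standard (the preimage in $P$ of a vertex of $\pi(P)$ is a face of $P$, and any vertex of that face works), and the sandwich $\Z^k=\bar\pi^{(n-1-k)}(\pi(\aff(U)\cap\Z^n))\subseteq\bar\pi^{(n-1-k)}(A'\cap\Z^{n-1})\subseteq\Z^k$ closes the argument in both cases. This is essentially the argument in Liu's original paper, so nothing is lost relative to the cited source.
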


As Liu \cite[Thm.~1.1]{Liu2009} showed, the coefficients of the Ehrhart polynomial of lattice-face polytopes have a nice geometric meaning.

\begin{theorem}[Liu, 2009]\label{thm_liu}
Let $P\in\P^n$ be a lattice-face polytope. Then
 \begin{equation*}
\LE(P,k)=\sum_{i=0}^n\vol_i(\pi^{(n-i)}(P))k^i,
\end{equation*}
where $\vol_0(\pi^{(n)}(P)):=1$.
\end{theorem}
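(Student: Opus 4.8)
The plan is to prove the formula by induction on $n$, peeling off one coordinate at a time via the projection $\pi=\pi^{(1)}$. The base case $n=0$ is trivial, since a lattice-face polytope in $\R^0$ is a point and $\LE(P,k)=1=\vol_0$. For the inductive step I would fix a lattice-face polytope $P\in\P^n$ and count the lattice points of $kP$ by slicing along the fibers of $\pi$. Concretely, for each lattice point $y\in\Z^{n-1}$ lying in $\pi(kP)=k\,\pi(P)$, the fiber $\pi^{-1}(y)\cap kP$ is a (possibly degenerate) vertical segment, and $\#(kP\cap\Z^n)$ is the sum over all such $y$ of the number of integer points on that segment.

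The key geometric step is to evaluate the number of lattice points on each vertical fiber. The upper boundary and lower boundary of $P$ over $\pi(P)$ are unions of facets, each lying in an $(n-1)$-dimensional affine space $H$ spanned by vertices of $P$. By Lemma 3.1(iii), for any $y\in\Z^{n-1}$ the unique preimage $\pi^{-1}(y)\cap H$ is again a lattice point, so both the top and bottom of the fiber over an integral $y$ are integral. Consequently the number of integer points on the fiber equals (top height) $-$ (bottom height) $+\,1$, with no floor functions or fractional corrections — this is precisely where the lattice-face hypothesis does its work. Summing the $+1$ term over all integral $y\in k\,\pi(P)$ contributes $\LE(\pi(P),k)$, and summing the height differences contributes the quantity that should become the top-degree part of the count.

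To finish, I would argue that summing the height difference over $y\in k\,\pi(P)\cap\Z^{n-1}$ yields $\vol_n(kP)=\vol_n(P)\,k^n$. One clean way is to observe that the total lattice-point count splits as
\begin{equation*}
\LE(P,k)=\vol_n(P)\,k^n+\LE(\pi(P),k),
\end{equation*}
the first summand coming from the fact that the integral of the fiber-length over $\pi(kP)$ is $\vol_n(kP)$, together with an exact cancellation (over the balanced top/bottom boundaries) that converts the lattice sum of heights into the volume — again a consequence of integrality of the boundary facets guaranteed by Lemma 3.1(iii). Then, since by Lemma 3.1(i) $\pi(P)$ is itself a lattice-face polytope in $\R^{n-1}$, the induction hypothesis gives $\LE(\pi(P),k)=\sum_{i=0}^{n-1}\vol_i(\pi^{(n-i)}(P))\,k^i$, where I use that $\pi^{(n-i)}$ applied to $P$ agrees with $\pi^{(n-1-i)}$ applied to $\pi(P)$. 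Combining the two displays reproduces the claimed polynomial exactly, with $\vol_n(P)k^n$ as the leading term and $\vol_0=1$ as the constant term.

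The main obstacle I anticipate is making the middle step fully rigorous: showing that the lattice sum of fiber-heights over integral $y$ equals the continuous volume $\vol_n(P)k^n$ \emph{on the nose}, rather than up to lower-order boundary error. This is not automatic for general lattice polytopes — it is exactly the special feature of lattice-face polytopes. The honest justification requires the integrality of every top and bottom facet's fiber values (Lemma 3.1(iii)) so that the discrete sum telescopes into an exact volume without remainder; I would need to track the boundary contributions carefully and confirm that the defining projection property forces the error terms to vanish identically. Controlling this cancellation uniformly over all dilates $kP$ (using Lemma 3.1(ii), that $kP$ remains lattice-face) is the crux of the argument.
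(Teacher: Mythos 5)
The paper does not prove this theorem; it quotes it from Liu \cite{Liu2009}, so the only fair comparison is with Liu's actual argument, whose overall skeleton (induction on $n$ via $\pi$, the recursion $\LE(kP)=\vol_n(P)k^n+\LE(k\,\pi(P))$, integrality of the fiber endpoints from Lemma \ref{lem_prop_latfac}) your outline reproduces correctly. You also correctly locate the crux. But the crux is a genuine gap in your proposal, and the way you propose to close it does not work: the identity
\begin{equation*}
\sum_{y\in k\pi(P)\cap\Z^{n-1}}\bigl(u(y)-l(y)\bigr)=\vol_n(kP),
\end{equation*}
where $u$ and $l$ are the upper and lower height functions of $kP$ over $k\pi(P)$, is \emph{not} a consequence of the integrality of $u(y)$ and $l(y)$ at lattice points $y$, and no amount of ``tracking boundary contributions'' of top and bottom facets will extract it from Lemma \ref{lem_prop_latfac}(iii) alone. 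Replacing the integral of a piecewise linear function by its sum over lattice points introduces Euler--Maclaurin-type corrections governed by the lower Ehrhart coefficients of the pieces of the subdivision of $k\pi(P)$ on which $u-l$ is linear; for $n\geq 3$ these corrections do not vanish for a general integral piecewise linear function with integral breakpoints (already summing the constant function $1$ over a polygon gives its lattice point count, not its area). Showing that they vanish here is exactly the content of the theorem, so your sketch is circular at this point.

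What is actually needed, and what Liu does, is a separate inductive argument for this identity: triangulate $P$ into simplices spanned by its vertices (each of which is again lattice-face --- this uses the hypothesis of Definition \ref{def_latface} for \emph{all} $k\leq n-1$, not just for facets), observe that both sides of the displayed identity are additive under such a subdivision, and then prove the simplex case by a signed decomposition combined with an induction in which the lattice sum of the height function $u$ over the lattice points of its domain is itself identified with a volume via the induction hypothesis in one dimension lower. Without this second induction (or some substitute for it), the step ``lattice sum of fiber lengths $=$ volume'' remains unproved, and the argument is incomplete precisely where all of the work lies.
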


This will be our starting point to prove Theorem \ref{thm_latface}\,. But first, we need an auxiliary lemma that relates the successive minima of lattice-face polytopes to those of their projections.

\begin{lemma}\label{lem_latface}
Let $P\in\P^n$ be a lattice-face polytope.
\begin{itemize}
 \item[\romannumeral1)] If $P$ is $0$-symmetric, then, for $1\leq
   j\leq i\leq n$,
  \begin{equation*}
    \lambda_j(\pi^{(n-i)}(P),\Z^i)\geq\lambda_j(P).
  \end{equation*}
 \item[\romannumeral2)] If $0\in\mathrm{vert}(P)$ and
   $SP=\conv(P,-P)$, then, for $1\leq j\leq i\leq n$,
  \begin{equation*}
    \lambda_j(\pi^{(n-i)}(SP),\Z^i)\geq\lambda_j(SP).
  \end{equation*}
\end{itemize}
\end{lemma}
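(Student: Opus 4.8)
The plan is to prove both parts of Lemma~\ref{lem_latface} simultaneously by iterating the single-projection case $i = n-1$. The key structural observation is that for a lattice-face polytope $P$, the projection $\pi(P) = \pi^{(1)}(P)$ is again lattice-face (Lemma~\ref{lem_prop_latfac}~\romannumeral1)), and $\pi^{(n-i)} = \pi^{(1)}\circ\cdots\circ\pi^{(1)}$ is the $(n-i)$-fold composition of $\pi$. So if I can show, for a $0$-symmetric lattice-face polytope $P\in\P^n$, the one-step inequality
\begin{equation*}
\lambda_j(\pi(P),\Z^{n-1})\geq\lambda_j(P,\Z^n),\quad 1\leq j\leq n-1,
\end{equation*}
then applying it repeatedly down the chain $P\mapsto\pi(P)\mapsto\pi^{(2)}(P)\mapsto\cdots$ yields $\lambda_j(\pi^{(n-i)}(P),\Z^i)\geq\lambda_j(\pi^{(n-i-1)}(P),\Z^{i+1})\geq\cdots\geq\lambda_j(P)$ for all $j\leq i$. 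For part~\romannumeral2), the set $SP=\conv(P,-P)$ is $0$-symmetric by construction, and one would first want to check that $SP$ is itself lattice-face (or at least that $\pi(SP)=S(\pi(P))$ and that the relevant projection properties survive), reducing part~\romannumeral2) to the same one-step claim applied to $SP$.

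\textbf{The one-step inequality} is the heart of the matter, and the natural approach is to transport witnesses for the successive minima of $\pi(P)$ back up to $P$. Fix $j$ and suppose $\lambda:=\lambda_j(\pi(P),\Z^{n-1})$; by definition there are $j$ linearly independent lattice points $\bar{w}_1,\dots,\bar{w}_j\in\lambda\,\pi(P)\cap\Z^{n-1}$. Since $\pi$ forgets the last coordinate and $P$ is lattice-face, each $\bar{w}_l$ lies in $\pi(\lambda P)=\lambda\,\pi(P)$, so it has a preimage in $\lambda P$; the crucial point is to produce a preimage that is again a \emph{lattice} point of $\Z^n$. This is exactly where Lemma~\ref{lem_prop_latfac}~\romannumeral3) enters: for a facet-spanning affine hyperplane $H$, preimages of lattice points of $\Z^{n-1}$ under $\pi^{-1}\cap H$ are lattice points. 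Using the facet structure of $\lambda P$ (and $0$-symmetry to control the fibre, e.g.\ choosing the face where the fibre is centered or using symmetry of the top and bottom facets), one lifts $\bar{w}_1,\dots,\bar{w}_j$ to linearly independent $w_1,\dots,w_j\in\lambda P\cap\Z^n$, whence $\lambda_j(P,\Z^n)\leq\lambda$.

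\textbf{The main obstacle} is ensuring the lift stays both integral and inside $\lambda P$ while preserving linear independence. The projection of $\lambda P$ is $\lambda\,\pi(P)$, but a given fibre $\pi^{-1}(\bar{w}_l)\cap\lambda P$ is a segment whose endpoints lie on the upper and lower facets of $\lambda P$; integrality of the lift requires landing on a facet-spanning hyperplane where Lemma~\ref{lem_prop_latfac}~\romannumeral3) applies, and one must argue that such an integral point falls within the segment rather than outside it. Here $0$-symmetry is what makes this clean: the upper and lower boundary facets come in antipodal pairs, and by Lemma~\ref{lem_prop_latfac}~\romannumeral2) the dilate $\lambda P$ (for integer $\lambda$, with the general case handled by the homogeneity and density used throughout the paper) is still lattice-face, so its defining facets each meet every fibre over a lattice point in a lattice point. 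The remaining care is purely combinatorial: checking that the chosen lifts remain linearly independent, which follows because their projections $\bar{w}_1,\dots,\bar{w}_j$ already are, and an added last coordinate cannot destroy independence.
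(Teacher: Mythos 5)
Your overall skeleton matches the paper's: reduce to the single projection step $\lambda_j(\pi(P),\Z^{n-1})\geq\lambda_j(P)$ via Lemma~\ref{lem_prop_latfac}~i) and iteration, then lift linearly independent witnesses from $\lambda\,\pi(P)\cap\Z^{n-1}$ to lattice points of $\lambda P$ using Lemma~\ref{lem_prop_latfac}~iii), noting that lifts of independent vectors stay independent. But the mechanism you propose for the lift has a genuine gap. You want to land on a facet-spanning hyperplane of $\lambda P$ and invoke Lemma~\ref{lem_prop_latfac}~ii) and iii) for the dilate; this only makes sense when $\lambda$ is an integer, and the successive minima are in general not integers. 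For non-integer $\lambda$ the affine hull of a facet of $\lambda P$ is $\lambda\,\aff(F)$ for a facet $F$ of $P$, which is a translate of $\aff(F)$ and in general meets the fibre $\pi^{-1}(z)$ in a non-lattice point; there is no ``density'' argument in the paper that repairs this, and it is not clear one exists (rescaling by the denominator $q$ of $\lambda=p/q$ lifts $qz$ into the lattice-face polytope $pP$, but dividing the lift back by $q$ destroys integrality). The paper avoids the issue entirely by choosing a hyperplane through the origin: writing $z=\mu\sum\gamma_i v_i$ with $\gamma_i\geq 0$, $\sum\gamma_i\leq 1$, for linearly independent vertices $v_i$ of $\pi(P)$, it lifts each $v_i$ to a vertex $\bar v_i$ of $P$ and takes $H=\aff\{\pm\bar v_1,\dots,\pm\bar v_{n-1}\}$. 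Since $H$ is a linear subspace it is invariant under dilation by $\mu$, so Lemma~\ref{lem_prop_latfac}~iii) applies to $P$ itself (here $0$-symmetry is used to guarantee that the points $\pm\bar v_i$ are all vertices of $P$, not to symmetrize top and bottom facets), and the containment $\bar z=\mu\sum\gamma_i\bar v_i\in\mu P$ is an explicit convex-combination computation rather than a boundary argument.

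A second, smaller gap is in part~ii): you propose to reduce it to part~i) by checking that $SP=\conv(P,-P)$ is itself lattice-face. That is not established, and is doubtful: the affine hulls of mixed vertex sets containing both $v$ and $-w$ for $v,w\in\mathrm{vert}(P)$ need not inherit the lattice-face property of $P$. The paper instead reruns the same argument for $SP$ directly, using only that $\mathrm{vert}(SP)\subseteq\{\pm v: v\in\mathrm{vert}(P)\}$ together with $0\in\mathrm{vert}(P)$ to control the signs, so that the spanning hyperplane $H$ can still be written as the affine hull of genuine vertices of the lattice-face polytope $P$.
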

\begin{proof}
\romannumeral1): It suffices to show that
$\lambda_j:=\lambda_j(\pi(P),\Z^{n-1})\geq\lambda_j(P)$, for all $j=1,\dots,n-1$. To this end, let $\{z_1,\dots,z_j\}\subset\Z^{n-1}$ be linearly independent lattice points in $\lambda_j\, \pi(P)$. Our first observation is that any set of vectors $\{\bar z_1,\dots,\bar z_j\}\subset\R^n$ with $z_i=\pi(\bar z_i), i=1,\dots,j$, is also linearly independent, because any linear dependence would be preserved by the projection $\pi$. Therefore, we need to show that, for all $i=1,\dots,j$, there is always a lattice point $\bar z_i\in\lambda_j\, P$ such that $z_i=\pi(\bar z_i)$.\par
In order to see this, we fix an $i$ and  set   $z=z_i$ and
$\mu=\lambda_i>0$. In particular, we have $z\in\mu \pi(P)\cap\Z^{n-1}$. Since, $0\in\mu\pi(P)$, there are linearly independent $v_1,\dots,v_{n-1}\in\mathrm{vert}(\pi(P))$ and $\gamma_1,\dots,\gamma_{n-1}\in[0,1]$ with $\sum_{i=1}^{n-1}\gamma_i\leq1$, such that $z=\mu\sum_{i=1}^{n-1}\gamma_i v_i$. For any $v_i$ there is a vertex $\bar v_i$ of $P$ in the preimage of $v_i$ under $\pi$, and these $\bar v_1,\dots,\bar v_{n-1}$ are linearly independent. This means, that the hyperplane $H=\aff\{0,\bar v_1,\dots,\bar v_{n-1}\}=\aff\{\pm\bar v_1,\dots,\pm\bar v_{n-1}\}$ is $(n-1)$-dimensional and spanned by vertices of $P$, because $P=-P$. Therefore, since $P$ is a lattice-face polytope we have by Lemma \ref{lem_prop_latfac} \romannumeral3) that the point $\bar z=\pi^{-1}(z)\cap H$ has integral coordinates. It remains to show that $\bar z$ lies in $\mu P$. The containment of $\bar z$ in $H$ gives us $\beta_1,\dots,\beta_{n-1}\in\R$ such that $\bar z=\sum_{i=1}^{n-1}\beta_i\bar v_i$. Furthermore, it is $$\mu\sum_{i=1}^{n-1}\gamma_i v_i=z=\pi(\bar z)=\sum_{i=1}^{n-1}\beta_i\pi(\bar v_i)=\sum_{i=1}^{n-1}\beta_i v_i,$$ which yields $\beta_i=\mu\gamma_i$, for all $i=1,\dots,n-1,$ because the $v_i$'s were chosen to be linearly independent. So, with $\sum_{i=1}^{n-1}\gamma_i\leq1$, we get $\bar z=\mu\sum_{i=1}^{n-1}\gamma_i\bar v_i\in\mu P$ as claimed.\par
In conclusion, we found the point $\bar z\in\mu P\cap\Z^n$ for which
$z=\pi(\bar z)$ and we are done.

The proof of \romannumeral2) follows the same lines as above. We only
note, that $\mathrm{vert}(SP)\subseteq\{\pm v :  v\in\mathrm{vert}(P)\}$ and the assumption $0\in\mathrm{vert}(P)$ is used to simultaneously control the signs of the vertices which span $H$.
\end{proof}

\begin{remark}
The above lemma does not hold for general polytopes. For example, consider $P_t=\conv\left\{\pm\binom{t-1}1,\pm\binom{t}1\right\}$, $t\in\N$. We have $\lambda_1(P_t,\Z^2)=1$ and $\lambda_1(P_t|e_2^\perp,\Z)=\frac1{t}$. Therefore, there does not even exist a constant depending on the dimension such that the successive minima of the projection could be bounded from below, up to this constant, by those of the original polytope.
\end{remark}

\begin{proof}[Proof of Theorem \ref{thm_latface}]
\romannumeral1): By Theorems \ref{thm_liu} and \ref{thm_mink} we obtain, for all $i=1,\dots,n$, \begin{equation*}
\lE_i(P)=\vol_i(\pi^{(n-i)}(P))\leq\prod_{j=1}^i\frac2{\lambda_j(\pi^{(n-i)}(P),\Z^i)}.
\end{equation*}
Using Lemma \ref{lem_latface} \romannumeral1), we continue this inequality to get
\begin{equation*}
\lE_i(P)\leq\prod_{j=1}^i\frac2{\lambda_j(P)}\leq\sigma_i\left(P\right).
\end{equation*}

Note, that for $i\neq n$ the last inequality sign is actually a strict one.\par
\romannumeral2): By definition it is $P\subset SP$ and so $\vol_i(\pi^{(n-i)}(P))\leq\vol_i(\pi^{(n-i)}(SP))$. Thus, using Lemma \ref{lem_latface} \romannumeral2) we can argue in the same way as in the first part.
\end{proof}

\section{Proof of Proposition \ref{prop_badexample_coeff}}

Recall $Q^n_l=\conv\left\{lC_{n-1}\times\{0\},\pm e_n\right\}$ as the polytope under consideration. By cutting $kQ^n_l$ into lattice slices orthogonal to $e_n$, we find that the Ehrhart polynomial of $Q^n_l$ is given by
\begin{eqnarray*}
\LE(kQ^n_l)&=&(2kl+1)^{n-1}+2\sum_{j=0}^{k-1}(2jl+1)^{n-1}\\
&=&(2kl+1)^{n-1}+2\sum_{j=0}^{k-1}\sum_{i=0}^{n-1}\binom{n-1}{i}(2jl)^i\\
&=&\sum_{i=0}^{n-1}\binom{n-1}{i}(2l)^ik^i+2\sum_{i=0}^{n-1}\binom{n-1}{i}(2l)^i\left(\sum_{j=0}^{k-1}j^i\right).
\end{eqnarray*}
Faulhaber's formula (see \cite[p.~106]{ConwayGuy1996}) expresses the sum $\sum_{j=0}^{k-1}j^i$ as a polynomial in $k$. Plugging this into the above identity and collecting for powers of $k$ yields
\begin{equation*}
\lE_i(Q^n_l)=2(2l)^{i-1}\left(\binom{n-1}{i}l+\sum_{j=i-1}^{n-1}P(i,j)\binom{n-1}{j}(2l)^{j-i+1}\right),
\end{equation*}
where $P(i,j)=\sum_{t=i}^{j+1}\frac{(-1)^{t-i}\binom{j+1}{t}\binom{t}{i}}{j+1}B_{j+1-t}$ and $B_m$ are the Bernoulli numbers, with $B_1=\frac12$ (see \cite[p.~107]{ConwayGuy1996}). Therefore, via $P(n,n-1)=\frac1{n}, P(n,n)=-\frac12, P(n-1,n)=\frac{n}{12}$ and $P(n-2,n)=0$, we obtain
\begin{eqnarray*}
\lE_{n-2}(Q^n_l)&=&(n-1)(2l)^{n-3}\left(\frac23 l^2+1\right)\quad\textrm{and}\\
\lE_{n-3}(Q^n_l)&=&\frac23\binom{n-1}2(2l)^{n-4}\left(2l^2+1\right).
\end{eqnarray*}
The successive minima are $\lambda_1(Q^n_l)=\ldots=\lambda_{n-1}(Q^n_l)=\frac1{l}$ and $\lambda_n(Q^n_l)=1$, from which we get
\begin{equation*}
\sigma_i(Q^n_l)=\binom{n-1}{i}(2l)^i+2\binom{n-1}{i-1}(2l)^{i-1},\textrm{ for }1\leq i\leq n-1.
\end{equation*}
Seen as polynomials in $l$, the $\sigma_i(Q^n_l)$ have degree $i$, whereas $\lE_{n-2}(Q^n_l)$ and $\lE_{n-3}(Q^n_l)$ have degree $n-1$ and $n-2$, respectively. Thus, for $i\in\{n-2,n-3\}$ and any fixed constant ${\rm c}$, there exists an $l\in\N$ such that $\lE_i(Q^n_l)>{\rm c}\,\sigma_i(Q^n_l)$.\par
Note, that Conjecture \ref{conj_main} nevertheless holds for all the polytopes $Q^n_l$. As a final remark, we consider the special case $n=3$. Here, we get
\begin{equation*}
 \LE(kQ^3_l)=\frac83 l^2 k^3 + 4lk^2 + \left(\frac43 l^2+2\right)k + 1,
\end{equation*}
i.e., all Ehrhart coefficients of $Q^3_l$ are positive, and
\begin{equation*}
 \LL(kQ^3_l)=8l^2 k^3 + (4l^2+8l)k^2 + (4l+2)k + 1.
\end{equation*}

\smallskip
\noindent{\it Acknowledgment.} The authors would like to thank the
referees for their very valuable comments, suggestions and
corrections.  We also  would like to thank Mar\'{\i}a
Hern{\'a}ndez Cifre for her help on an earlier draft. 

\providecommand{\bysame}{\leavevmode\hbox to3em{\hrulefill}\thinspace}
\providecommand{\MR}{\relax\ifhmode\unskip\space\fi MR }
\providecommand{\MRhref}[2]{%
  \href{http://www.ams.org/mathscinet-getitem?mr=#1}{#2}
}
\providecommand{\href}[2]{#2}

\end{document}